  \tikzstyle{block} = [rectangle, draw,
  \tikzstyle{line} = [draw, -latex']
\definecolor{labelkey}{rgb}{0,0,1}
\newtheorem{lemma}{Lemma}[section]
\newtheorem{theorem}[lemma]{Theorem}
\newtheorem{proposition}[lemma]{Proposition}
\newtheorem{prop}[lemma]{Proposition}
\newtheorem*{prop*}{Proposition}
\newtheorem{cor}[lemma]{Corollary}
\newtheorem{conj}[lemma]{Conjecture}
\newtheorem{claim*}{Claim}
\newenvironment{proof3}{\emph{Proof of Theorem 3.1:}}{\hfill$\square$}
\theoremstyle{definition}
\newtheorem{remark}[lemma]{Remark}
\newcommand{\F}{{\mathbb F}}
\newcommand{\Q}{{\mathbb Q}}
\newcommand{\Z}{{\mathbb Z}}
\newcommand{\calN}{{\mathcal N}}
\newcommand{\calO}{{\mathcal O}}
\newcommand{\frakf}{{\mathfrak f}}
\newcommand{\frakp}{{\mathfrak p}}
\newcommand{\frakq}{{\mathfrak q}}
\newcommand{\frakN}{{\mathfrak N}}
\newcommand{\frakP}{{\mathfrak P}}
\DeclareMathOperator{\nr}{Norm}
\DeclareMathOperator{\tr}{Tr}
\DeclareMathOperator{\Frob}{Frob}
\DeclareMathOperator{\Aut}{Aut}
\DeclareMathOperator{\Res}{Res}
\DeclareMathOperator{\Norm}{ Norm}
\DeclareMathOperator{\SL}{SL}
\DeclareMathOperator{\GL}{GL}
\DeclareMathOperator{\Tors}{Tors}
\numberwithin{equation}{section}
\numberwithin{table}{section}
\title{On Ternary Diophantine Equations of Signature $(p,p,3)$ over Number Fields}
\author{Erman Isik}
\author{ Yasemin Kara}
\author{ Ekin Ozman} 
    \address{Bogazici University, Istanbul, Turkey}
  \email{erman.isik@boun.edu.tr}
     \address{Bogazici University, Istanbul, Turkey}
  \email{yasemin.kara@boun.edu.tr}
  \address{Bogazici University, Istanbul, Turkey\\
  University of Texas at Austin-USA}
  \email{ekin.ozman@boun.edu.tr}
\subjclass[2010]{11D41,11F80, 11F03, 11F75}
\keywords{Fermat equation; generalized Fermat equation; Bianchi forms; modularity.}
\begin{document}

         \maketitle
         \begin{abstract} 
         In this paper, we prove results about solutions of the Diophantine equation $x^p+y^p=z^3$ over various number fields using the modular method.  Firstly, by assuming some standard modularity conjecture we prove an asymptotic result for general number fields of narrow class number one satisfying some technical conditions.  Secondly, we show that there is an explicit bound such that the equation $x^p+y^p=z^3$ does not have a particular type of solution over $K=\Q(\sqrt{-d})$ where $d=1,7,19,43,67$ whenever $p$ is bigger than this bound.  During the course of the proof we prove various results about the irreducibility of Galois representations, image of inertia groups and  Bianchi newforms.	
	\end{abstract}

 \section{Introduction}

 Solving Diophantine equations is one of the oldest and widely studied topics in number theory. Yet we still don't have a general method that would allow us to produce solutions of a given Diophantine equation. Most of the time, it may be easier to show the nonexistence of solutions but even this can be quite challenging as it was the case for the proof of Fermat's Last Theorem. The method to solve the Fermat equation, used by Wiles in his famous proof, can be adapted to solve similar Fermat type equations. This strategy, which is referred as the `modular method', starts with an elliptic curve attached to a putative solution of the given equation. Then, using many celebrated theorems of the area,
  the problem can be reduced to  one of the following: computing newforms of a certain level, or
 computing all elliptic curves of a given conductor with particular information about torsion subgroup and rational isogeny, or 
 computing all solutions to an $S$-unit equation. Neither of these computations are easy in general. Especially if one needs to prove Fermat's theorem over a number field other than rationals, some fundamental theorems that go into the proof now become conjectures only, such as the modularity conjecture. Recently, there has been much progress in several different generalizations of this famous result.  For instance in  \cite{FS}, Freitas and Siksek 
 proved the asymptotic Fermat's Last Theorem (FLT) for certain totally real fields $K$. That is, they
 showed that there is a constant $B_K$ such that for any prime $p>B_K$, the only solutions to the 
 Fermat equation $a^p+b^p+c^p=0$ where $a,b,c \in \calO_{K}$ are the trivial ones satisfying $abc=0$.  
 Then, Deconinck \cite{HD} extended the results of Freitas and Siksek \cite{FS} to the generalized Fermat equation of the
 form $Aa^p+Bb^p+Cc^p=0$ where $A,B,C$ are odd integers belonging to a totally real field. Later in \cite{SS},
 \c{S}eng\"{u}n and Siksek proved the asymptotic FLT for any number field $K$ by assuming
 modularity.  This result has been generalized by Kara and Ozman in \cite{KO} to the case of generalized Fermat equation. Also, recently in \cite{Turcas2018} and \cite{Turcas2020} Turcas studied Fermat equation over imaginary quadratic fields $\Q(\sqrt{-d})$ with class number one.

 Similar generalizations are quite rare for other Fermat type equations such as  $x^p+y^q=z^r$. The solutions of this equation have been studied over rationals by many mathematicians including Darmon, Merel, Bennett and Poonen.  Several mathematicians have worked on similar Fermat type equations with different exponents over rational numbers. We have summarized these results in \cite{IKO} therefore will not repeat them here but we want to mention that not many results exist for generalizations of these to higher degree number fields. During the write-up of this paper, we have been informed about the work of Mocanu \cite{Mocanu} where she improves the results in \cite{IKO} and proves similar versions for the Diophantine equation of signature $(p,p,3)$. In this paper we also study the solutions of $x^p+y^p=z^3$ over number fields. However our results differ from Mocanu (and hence the results in \cite{IKO}) in the sense that we prove results about solutions of \ref{maineqn} over fields that are not totally real. In the appendix, we mention versions of these results for the Diophantine equation of signature $(p,p,2)$. Our results can be summarized as follows:

  \subsection{Our results}
   Let $K$ be a number field and $\calO_{K}$ be its ring of integers.  For a prime number $p$, we refer the equation
  \begin{equation}\label{maineqn}
  	a^p+b^p=c^3,\quad a,b,c \in \calO_{K}
  \end{equation}
  as \textit{the Fermat equation over K with signature (p,p,3)}.  A solution $(a,b,c)$ is
  called \textbf{trivial} if $abc=0$, otherwise \textbf{non-trivial}.   A solution $(a,b,c)$ of equation \eqref{maineqn} is called \textbf{primitive} if $a, b $ and $c$ are pairwise coprime. Since we will consider number fields with class number one, a putative solution $(a,b,c)$ can be scaled such that $a,b,c$ are coprime. Note also that if $a,b,c \in \mathcal{O}_K$ satisfy $a^p+b^p=c$ where $p>3$, then depending on what $p$ is modulo $3$, $(ac,bc,c^{\frac{p+1}{3}})$ or $(ac,bc,c^{\frac{p-1}{3}})$ is a non-primitive solution to the equation ($\ref{maineqn}$).%
  We consider only primitive solutions to \eqref{maineqn}. In \cite{DG}, it was shown that equation \eqref{maineqn} has finitely many primitive solutions.

  We say that ``\emph{the asymptotic Fermat Theorem holds for $K$ and signature $(p,p,3)$}'' if there is a constant
  $B_K$ such that for any prime $p>B_K$, the Fermat equation with signature $(p,p,3)$ (given in equation \eqref{maineqn}) does not have non-trivial, primitive solutions.

We have two main results about solutions of Equation \ref{maineqn}. The first result is an asymptotic result for some of the solutions over general number fields and the second one is an explicit result for general solutions over some  imaginary quadratic fields.

  \begin{theorem}\label{mainthm1} 
   Let $K$ be a number field with narrow class number $h_K^{+}=1$ satisfying Conjectures \ref{conj1} and \ref{conj2} and containing $\Q(\zeta_{3})$ where $\zeta_{3}$ is a primitive $3^{rd}$ root of unity. Assume that $\lambda$ is the only prime of $K$ lying above $3$. Let $W_K$ be the set of $(a,b,c)\in\calO_K$
   such that $(a,b,c)$ is a primitive solution to $x^p+y^p=z^3$ with $\lambda|b$. Then there is a constant $B_K$ -depending only on $K$- such that for $p > B_K$, equation \eqref{maineqn} has no solution $(a,b,c) \in W_K$. In this case we say that the asymptotic Fermat's Last Theorem holds for $W_K$ and signature $(p,p,3)$.
   \end{theorem}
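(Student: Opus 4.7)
The plan is to run the modular method, in the form adapted by Freitas to the signature $(p,p,3)$ Fermat equation over a base containing $\zeta_3$. To a primitive solution $(a,b,c)\in W_K$ I attach a Frey elliptic curve $E=E_{a,b,c}/K$, using the factorization
$$a^p+b^p=(a+b)(a+\zeta_3 b)(a+\zeta_3^2 b)=c^3$$
to write down a Weierstrass model whose $c_4$, $c_6$, and discriminant are explicit expressions in the three factors $a+\zeta_3^i b$. The coprimality of $a,b,c$ together with the hypothesis $\lambda\mid b$ built into the definition of $W_K$ will fix a distinguished minimal model and make the subsequent reduction analysis explicit.

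The next step is to compute the conductor $\mathfrak{N}_E$ uniformly in $p$. Away from $\lambda$ and the primes above $2$, Tate's algorithm shows that $E$ has multiplicative reduction exactly at the primes dividing $abc$, and the $p$-th-power structure of the minimal discriminant ensures that these primes are removed by level-lowering. At primes above $2$, a finite case analysis on the $2$-adic valuations of $a,b,c$ (independent of $p$) yields a bounded conductor exponent. The delicate prime is $\lambda$, and this is precisely where the condition $\lambda\mid b$ is used: following Kraus' $3$-adic analysis for this Frey curve, after a suitable twist $E$ acquires a small conductor exponent at $\lambda$. The outcome is an ideal $\mathfrak{M}\subset\mathcal{O}_K$, depending only on $K$, such that the prime-to-$p$ part of $\mathfrak{N}_E$ divides $\mathfrak{M}$.

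Modularity (Conjecture \ref{conj1}) now attaches to $E$ an automorphic eigenform $\mathfrak{f}_E$ over $K$ of parallel weight $2$ and level $\mathfrak{N}_E$, while Conjecture \ref{conj2} guarantees that $\overline{\rho}_{E,p}$ is absolutely irreducible for $p$ larger than a constant depending only on $K$. Mazur--Ribet level-lowering then produces an eigenform $\mathfrak{f}'$ of level dividing $\mathfrak{M}$ with $\overline{\rho}_{\mathfrak{f}',p}\cong\overline{\rho}_{E,p}$. Since the space of weight-$2$ eigenforms of level dividing $\mathfrak{M}$ is finite-dimensional and does not depend on $p$, only finitely many such $\mathfrak{f}'$ arise. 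For each, I would pick an auxiliary prime $\mathfrak{q}\nmid p\mathfrak{M}$ of small norm: the Hasse bound on $a_\mathfrak{q}(E)$ combined with the congruence $a_\mathfrak{q}(E)\equiv a_\mathfrak{q}(\mathfrak{f}')\pmod{\mathfrak{p}}$ (for a prime $\mathfrak{p}\mid p$ in the coefficient field of $\mathfrak{f}'$) forces $p$ to divide a fixed nonzero algebraic integer, giving the desired bound $B_K$, unless $\mathfrak{f}'$ itself comes from an elliptic curve $E'/K$ with the same mod-$p$ representation; in that case I would exploit the fact that $E$ has multiplicative reduction with $p$-divisible discriminant valuation at each prime dividing $abc$ to force $E'$ to have matching reduction types, and rule out the remaining (essentially CM) cases by a dedicated inertia argument at $\lambda$ using that $K\supseteq\Q(\zeta_3)$.

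The main obstacle I anticipate is the $\lambda$-adic conductor calculation: primes above $3$ are wildly ramified for elliptic curves of additive reduction, and the $3$-adic Tate algorithm for this Frey curve is notoriously delicate; the assumption $\lambda\mid b$ is built into $W_K$ precisely to make this step tractable. The second subtle point is the elimination of rival eigenforms coming from elliptic curves with CM by $\Q(\zeta_3)$, which requires comparing inertia images at $\lambda$ on both sides and invoking the narrow class number hypothesis $h_K^+=1$ to control quadratic twists.
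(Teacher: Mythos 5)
Your proposal has several genuine gaps, and the most basic one is already fatal to the setup. The factorization
\[
a^p+b^p=(a+b)(a+\zeta_3 b)(a+\zeta_3^2 b)
\]
is an identity only for $p=3$; it is the factorization of $a^3+b^3$, not of $a^p+b^p$ for a general odd prime $p$. The Frey curve the paper actually attaches to a solution of $a^p+b^p=c^3$ is the Bennett--Vatsal--Yazdani curve $E: Y^2+3cXY+b^pY=X^3$, with $\Delta_E=3^3(ab^3)^p$ and $j_E=3^3c^3(9a^p+b^p)^3/(ab^3)^p$. Its bad primes divide $3ab$, not $c$, so your conductor analysis (including the claim that $E$ has multiplicative reduction at primes dividing $abc$, and the extraneous digression about primes above $2$) needs to be redone. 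You also misattribute the irreducibility of $\overline{\rho}_{E,p}$ to Conjecture \ref{conj2}: that conjecture is a modularity statement producing an elliptic curve (or fake elliptic curve) from a complex eigenform. Irreducibility is established unconditionally (Proposition \ref{irred} applied via Lemma \ref{pot.mult.red.}), and this is precisely where the hypothesis $\lambda\mid b$ enters: it forces $v_\lambda(j_E)<0$ and hence potentially multiplicative reduction at $\lambda$, which is what drives both the irreducibility argument and the image-of-inertia argument.

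More fundamentally, your proposed endgame cannot close the argument over an arbitrary $K$. Picking auxiliary primes $\mathfrak{q}$ of small norm and comparing $a_{\mathfrak{q}}(E)$ with $a_{\mathfrak{q}}(\mathfrak{f}')$ is the right move over an explicitly given field where one can enumerate the relevant eigenforms (indeed this is how the paper treats the concrete imaginary quadratic fields in Theorem \ref{mainthm2}), but for a general $K$ you have no handle on those finitely many forms or their Fourier coefficients, and the phrase ``dedicated inertia argument at $\lambda$'' is a placeholder for the entire remaining difficulty. The paper's route is different and is the crux you are missing: Theorem \ref{thm:levelred} passes from the Frey curve to an elliptic curve $E'/K$ with a $K$-rational point of order $3$, good reduction away from $\lambda$, and $v_\lambda(j')<0$; the contradiction then comes from Theorem \ref{thm for tot ram}, a nonexistence theorem of Freitas--Kraus--Siksek asserting that under the hypotheses $\Q(\zeta_3)\subset K$, a unique prime $\lambda\mid 3$, and $\gcd(h_K^+,6)=1$, no elliptic curve over $K$ admits a $K$-rational $3$-isogeny together with good reduction away from $\lambda$ and potentially multiplicative reduction at $\lambda$. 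That theorem is the key lemma, and it is exactly where the running hypotheses on $K$ do their work; without it, the asymptotic statement for general $K$ does not follow from the ingredients you have assembled.
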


\begin{theorem}\label{mainthm2}
		Let $K=\Q(\sqrt{-d})$ where $d\in\{1,7,19,43,67\}$ and $\ell_K$ be the largest prime in Table \ref{tab:primetorsion} corresponding to $K$. Let $C_K, M_K$ be defined as in Proposition \ref{irrCase2} and Corollary \ref{absirr}. Assume that Conjecture~\ref{conj1} holds true for $K$. Let $\lambda$ be the prime of $\calO_K$ lying over $3$. Then:

	\begin{itemize}
		\item[Case I] For any prime $p > {\rm max} \{\ell_K, C_K\}$, the Fermat equation over $K$ with signature $(p,p,3)$ does not have any non-trivial primitive solutions $(a,b,c)\in\calO_{K}$ such that $\lambda | b$.
		\item[Case II] If $p> B_K={\rm max}\{\ell_K,C_K,M_K\}$, $p$ splits in $K$ and $p \equiv 3 \pmod 4$ then the Fermat equation over $K$ with signature $(p,p,3)$ does not have any non-trivial primitive solutions $(a,b,c)\in\calO_{K}$.
	\end{itemize}
\end{theorem}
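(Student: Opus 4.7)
The plan is to run the modular method along the same lines as Theorem \ref{mainthm1}, but specialized to the five imaginary quadratic fields in the statement so that the implicit constant becomes fully effective. To a non-trivial primitive solution $(a,b,c) \in \mathcal{O}_K^3$ of $a^p+b^p=c^3$ I attach a Frey elliptic curve $E = E_{a,b,c}$ over $K$ of the standard shape used for signature $(p,p,3)$, read off its conductor $\mathfrak{N}$, and, assuming Conjecture \ref{conj1}, invoke modularity to produce a Bianchi cuspidal eigenform $\mathfrak{f}$ of parallel weight $2$ over $K$ such that $\bar\rho_{E,\mathfrak{p}} \simeq \bar\rho_{\mathfrak{f},\mathfrak{p}}$ for some prime $\mathfrak{p}\mid p$ of the Hecke field of $\mathfrak{f}$. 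The strategy is then to (i) prove that $\bar\rho_{E,p}$ is (absolutely) irreducible, (ii) apply level lowering to pin down the level of $\mathfrak{f}$, and (iii) eliminate the resulting finite list of Bianchi newforms by comparing Hecke eigenvalues.

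For Case I, the assumption $\lambda\mid b$ is what makes the analysis cleanest: it forces $E$ to have multiplicative reduction at $\lambda$, so the conductor exponent there is $1$ and the predicted level after level lowering is a specific ideal $\mathfrak{n}_K$ depending only on $K$. Irreducibility of $\bar\rho_{E,p}$ is provided by Proposition \ref{irrCase2} as soon as $p>C_K$, and the bound $p>\ell_K$, coming from the table of primes that can divide the order of a $K$-rational torsion point of an elliptic curve over $K$, rules out the remaining reducibility obstructions connected to rational $p$-isogenies. Level lowering then delivers a Bianchi newform of level $\mathfrak{n}_K$; the contradiction comes either from the emptiness of the space of newforms at that level, or from the congruence $a_{\mathfrak{q}}(E)\equiv a_{\mathfrak{q}}(\mathfrak{f})\pmod{\mathfrak{p}}$ at auxiliary primes $\mathfrak{q}$ of $K$ of good reduction, both of which can be checked field-by-field.

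For Case II, one no longer assumes that $\lambda$ divides a distinguished coordinate, so $E$ may have additive reduction at $\lambda$ and the predicted level is larger and more delicate. This is where the extra constant $M_K$ from Corollary \ref{absirr} enters: it upgrades irreducibility to absolute irreducibility of $\bar\rho_{E,p}$, which is what is actually needed to run level lowering cleanly. The auxiliary hypotheses are then used to control the image of inertia at the primes above $2$ and above $p$: the condition $p\equiv 3\pmod 4$ forces a specific shape for $\bar\rho_{E,p}$ restricted to inertia at a prime above $2$, while the assumption that $p$ splits in $K$ lets one pick a degree-one prime $\mathfrak{p}\mid p$ and conclude that the relevant mod-$\mathfrak{p}$ eigenvalue of $\mathfrak{f}$ must lie in $\mathbb{F}_p$. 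With these constraints in hand, the elimination step is run exactly as in Case I.

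The main obstacle in both cases is step (iii), the elimination of the lowered-level Bianchi newforms. Over an imaginary quadratic field a Bianchi newform is not automatically attached to an elliptic curve, so the elimination cannot simply be outsourced to a list of curves; it has to be done by explicit computation of Hecke eigenvalues at small split primes of $K$, checking for each of the five fields that the finite set of candidate newforms at the lowered level is incompatible with the image-of-inertia constraints established above. Aggregating these field-by-field computations together with the bounds $\ell_K,C_K,M_K$ into the single threshold $B_K=\max\{\ell_K,C_K,M_K\}$ is where the explicit nature of the theorem, and the bulk of the concrete work, is located.
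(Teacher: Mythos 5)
Your high-level outline---Frey curve, Serre-type modularity via Conjecture~\ref{conj1}, irreducibility, then elimination of Bianchi newforms---does match the overall architecture of the paper's proof, but two of the key ingredients in the statement are misidentified in a way that leaves genuine gaps.

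First, you have misread the role of $\ell_K$. In the paper, $\ell_K$ is \emph{not} a bound on torsion points of elliptic curves over $K$ and has nothing to do with rational $p$-isogenies. It is the largest prime occurring in the torsion of the abelianization $\Gamma_0(\frakN_E)^{\rm ab}$ of the relevant congruence subgroup (Table~\ref{tab:primetorsion}), and its purpose is to make Proposition~\ref{eigenform} explicit: Conjecture~\ref{conj1} only produces a \emph{mod $p$} eigenform, and one needs to lift it to a complex Bianchi newform before any elimination against a computed finite list of newforms can even begin. This lifting, via the short exact sequence in~\eqref{exact_sequence} and the Ash--Stevens lemma, is guaranteed precisely when $H^2(Y_0(\frakN_E),\Z)$, equivalently $\Gamma_0(\frakN_E)^{\rm ab}$, has no $p$-torsion, which is what $p>\ell_K$ ensures. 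Your proposal jumps straight from Conjecture~\ref{conj1} to ``a Bianchi cuspidal eigenform $\frakf$'' without this step, and then uses $\ell_K$ for something it is not. This is a missing idea, not a cosmetic difference: without the lift, the subsequent eigenvalue-comparison step has no target.

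Second, the auxiliary hypotheses in Case~II are not used the way you describe. The conditions $p\equiv 3\pmod 4$ and $p$ splits in $K$ are not about inertia at primes above $2$, nor about forcing $\frakf$'s eigenvalues into $\F_p$. They enter solely to establish \emph{absolute} irreducibility of $\overline{\rho}_{E,p}$ when $\lambda\nmid ab$, via Corollary~\ref{absirr}(2a): one shows that a representation that is irreducible but absolutely reducible must land in a non-split Cartan, applies the Larson--Vaintrob theorem (Theorem~\ref{larson}) to produce a CM curve $E'$ with $\theta^{12}=\lambda^{12}$, and then the split hypothesis forces $\theta$ to be $\F_p$-valued (order divisible by $p-1$), while $p\equiv 3\pmod 4$ lets one contradict the order of $\chi_p^{(p-1)/2}|_{I_\calP}$. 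The constant $M_K$ is the Larson--Vaintrob threshold. Finally, your elimination step in Case~II is also incomplete: over these fields some candidate Bianchi newforms at level dividing $\lambda^3$ have $C_\frakf=0$ because they are CM forms, and these are eliminated not by Hecke eigenvalue comparison but by observing that a CM Frey curve would have integral $j$-invariant, forcing $a,b$ to be units in $\calO_K$, which is impossible for equation~\eqref{maineqn}. In Case~I no eigenvalue comparison is needed at all, since the spaces of newforms at levels $1$ and $\lambda$ are simply empty.
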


Various different techniques have to be combined to achieve these results. For the asymptotic result, we mostly follow the approach in \cite{IKO} and \cite{KO} which relies on the paper of Sengun and Siksek \cite{SS}. For instance, we need the absolute irreducibility of the associated Galois representation in order to apply the Serre's modularity conjecture. In order to do this, one needs to prove the irreducibility first and then pass to absolute irreducibility. This is rather classical when the Frey curve has  potentially multiplicative reduction at $q$ for some $q$ appearing in the denominator of the $j$-invariant of the Frey curve. However, for $(p,p,3)$ case the associated Frey elliptic curve has potentially good reduction when $3$ does not divide the norm of $ab$. Therefore, one can only get a result about solutions of particular type. This was also mentioned in the papers of Turcas \cite{Turcas2018} and \cite{Turcas2020}. It is sometimes possible to overcome this obstruction when working over explicit fields. For instance, as done by Najman and Turcas in \cite{NT}, using a result of Vaintrob and Larson, it is possible to prove the absolute irreducibility of the associated Galois representation when $p$ is bigger than an computable constant $B_K$. One can apply a similar argument to the Galois representation related to the Diophantine equation of signature $(p,p,3)$. Of course, in order to do this we need an irreducibility result for $\bar{\rho}_{E,p}$ when $p$ is bigger than an explicit constant $B$. This is a nontrivial task to do even in the case of the classical Fermat equation which was done by Freitas and Siksek in \cite{FSANT}. We combine all these to obtain our second result, which gives information about the solutions of $(p,p,3)$ over imaginary quadratic number fields of class number one.

  \subsection*{Acknowledgements}
  We are grateful to Samir Siksek for very helpful comments and discussions.
 
 \section{Preliminaries}
 
 In this section we give the necessary background to prove the results.  We follow  \cite{SS}, \cite{FS} and the references therein.
 \subsection{Conjectures}In this section we state the conjectures
 assumed in the above theorems. For more details we refer to Sections 2 and 3 of \cite{SS}.

 Let $K$ be a number field with the ring of integers $\calO_{K}$ and $\frakN$ be an ideal of $\calO_{K}$.  The following result is proved by \c{S}eng\"{u}n and Siksek in \cite{SS}.
 
 \begin{prop}[\cite{SS}, Proposition 2.1]\label{eigenform}
 	There is an integer $B(\frakN)$, depending only on $\frakN$, such that for any prime $p>B(\frakN)$, every weight two, mod $p$ eigenform of level $\frakN$ lifts to a complex one.
 \end{prop}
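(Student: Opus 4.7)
The plan is to realize both mod $p$ and complex weight-two eigenforms of level $\frakN$ as Hecke eigenclasses in the cohomology of the arithmetic locally symmetric space $Y_0(\frakN)$ attached to $\Gamma_0(\frakN) \subset \GL_2(\calO_{K})$, and then pass between $\F_p$ and $\C$ coefficients using the universal coefficient theorem together with the Deligne--Serre lifting lemma. In the relevant cuspidal cohomological degree $d$, complex eigenforms correspond to Hecke eigenclasses in $H^d(Y_0(\frakN), \C)$ and mod $p$ eigenforms to Hecke eigenclasses in $H^d(Y_0(\frakN), \F_p)$.

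The first step is to bound the integral torsion once and for all. Since $Y_0(\frakN)$ has the homotopy type of a finite CW complex, the finitely generated abelian groups $H^i(Y_0(\frakN), \Z)$ have finite torsion subgroups, so the integer
\[ T(\frakN) \;:=\; \prod_i \bigl|H^i(Y_0(\frakN), \Z)_{\tors}\bigr| \]
depends only on $\frakN$. I would then feed the short coefficient sequence $0 \to \Z \xrightarrow{p} \Z \to \F_p \to 0$ into the long exact sequence in cohomology. For any prime $p$ coprime to $T(\frakN)$, the connecting homomorphisms in degrees $d$ and $d-1$ vanish, and one obtains a Hecke-equivariant isomorphism
\[ H^d(Y_0(\frakN), \Z)_{\mathrm{free}} \otimes \F_p \;\cong\; H^d(Y_0(\frakN), \F_p). \]

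Given a mod $p$ eigenform, the next step is to lift it. Apply the Deligne--Serre lifting lemma to the finitely generated free $\Z_p$-module $M := H^d(Y_0(\frakN), \Z)_{\mathrm{free}} \otimes \Z_p$ equipped with its commuting family of Hecke operators. This produces an eigenclass in $M \otimes_{\Z_p} \calO_L$ for some finite extension $L/\Q_p$ whose reduction modulo the maximal ideal recovers the prescribed eigenvalues. Fixing any embedding $L \hookrightarrow \C$ yields a complex weight-two eigenform of level $\frakN$ lifting the given mod $p$ one, and taking $B(\frakN)$ to exceed the largest prime divisor of $T(\frakN)$ completes the argument.

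The main technical obstacle is the possible presence of substantial integral torsion in $H^{*}(Y_0(\frakN), \Z)$; this is an essentially non-totally-real phenomenon, closely linked to the existence of mod $p$ eigenforms with no characteristic-zero lift, and it is the reason the statement is non-effective. We know the torsion is finite, so some $B(\frakN)$ exists, but producing an explicit value would require quantitative control of this torsion. The Hecke-equivariance of the coefficient-change isomorphism and of the Deligne--Serre lift is routine, following from naturality of Hecke correspondences on cohomology.
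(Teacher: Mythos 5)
Your proof is correct and matches the strategy of \cite{SS} (which the paper cites directly for this proposition, and recapitulates at the start of Section~5): both pass through the coefficient sequence $0 \to \Z \xrightarrow{p} \Z \to \F_p \to 0$, identify the obstruction to lifting with $p$-torsion in the integral cohomology of $Y_0(\frakN)$, bound that torsion once and for all, and then lift the resulting characteristic-zero Hecke eigensystem. The only cosmetic differences are that the paper works in the cuspidal degree $d=1$ explicitly and invokes the Ash--Stevens lifting lemma \cite{AS86} rather than Deligne--Serre, but these are interchangeable in this setting.
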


 In order to run the modular approach to solve Diophantine equations one needs generalized modularity theorems.
 Due to the lack of their existence we can only prove our theorems up to some conjectures. One of the assumed conjectures is a special case of Serre's modularity conjecture over number fields, stated below:
 
 \begin{conj}[\cite{FKS}, Conjecture 4.1]\label{conj1}  Let $\overline{\rho}:G_K\rightarrow GL_2(\overline{\mathbb{F}}_p)$ be an odd, 
 	irreducible, continuous representation with 
 	Serre conductor $\frakN$ (prime-to-p part of its Artin conductor) 
 	and such that $\det(\overline{\rho})=\chi_p$ is the mod p cyclotomic character. 
 	Assume that $p$ is unramified in $K$ and that $\overline{\rho}|_{G_{K_{\frakp}}}$ arises from
 	a finite-flat group scheme over $\calO_{K_{\frakp}}$ for every prime $\frakp|p$.  Then there is a weight 2 mod $p$
 	eigenform $\theta$ over $K$ of level $\frakN$ such that for all primes $\frakq$ coprime to $p\frakN$, we have
 	\[
 	\tr(\overline{\rho}(\Frob_{\frakq}))=\theta(T_{\frakq}),
 	\]
 	where $T_{\frakq}$ denotes the Hecke operator at $\frakq$.
 \end{conj}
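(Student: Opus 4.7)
The statement is Serre's modularity conjecture formulated for an arbitrary number field $K$; in this generality it is open, so what I sketch is a plan modelled on the Khare--Wintenberger proof over $\mathbb{Q}$, adapted to the number-field setting with the tools that have become available in the last decade.

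My plan is to split the argument into a modularity lifting theorem over $K$ and a residual modularity statement for $\overline{\rho}$. For the lifting side, the local conditions in the statement (finite-flat at every $\mathfrak{p}\mid p$ together with $\det\overline{\rho}=\chi_p$) force any characteristic zero lift with the prescribed ramification to be crystalline of Hodge--Tate weights $(0,1)$, which corresponds to weight two. Because $\overline{\rho}$ need not be self-dual, the classical Taylor--Wiles numerical coincidence between the universal deformation ring and the Hecke algebra fails, so I would invoke the Calegari--Geraghty framework and its subsequent refinements, which replace this coincidence by a derived patching argument. A big-image hypothesis on $\overline{\rho}$ would be needed here, to be extracted from the irreducibility and oddness input.

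For the residual side I would look for a characteristic zero lift of $\overline{\rho}$ inside a compatible system, then apply potential automorphy theorems in the style of Harris--Shepherd-Barron--Taylor over a solvable extension $L/K$ to realize $\overline{\rho}|_{G_L}$ as coming from an automorphic form on $\GL_2/L$, and finally descend via cyclic base change to obtain the desired eigenform $\theta$ of level $\frakN$ over $K$. The hard part --- and the reason this remains a conjecture rather than a theorem --- is that cyclic base change for $\GL_2$ is not available over arbitrary number fields, the potential automorphy results driving the residual step are established only under stringent restrictions on $K$ (essentially CM or totally real settings, often only after base changes one cannot undo), and the integral $p$-adic automorphic input that the Calegari--Geraghty machinery requires is not yet built in the generality demanded by this statement. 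Removing any one of these obstructions, let alone all three, would itself be a major theorem, which is exactly why the authors must assume the statement as a conjecture.
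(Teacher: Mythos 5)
There is no proof of this statement in the paper, and none was intended: the paper (following \cite{FKS}) states this as \emph{Conjecture}~4.1 and explicitly assumes it as a hypothesis in Theorems~\ref{mainthm1} and \ref{mainthm2}. You correctly recognize this, so there is no gap in your response per se --- you have declined to ``prove'' something that is genuinely open. Your survey of why it is open is also accurate in its broad strokes: the local finite-flat and determinant conditions do pin down weight two; the failure of the Taylor--Wiles numerical coincidence outside the totally real or CM setting is the reason one must pass to the Calegari--Geraghty derived patching framework, which in turn requires integral $p$-adic inputs (local-global compatibility at $p$, vanishing of cohomology outside a prescribed range, Ihara-type avoidance) that are not yet established for $\GL_2$ over a general number field; and residual modularity plus descent runs into the unavailability of non-solvable base change and the restriction of potential automorphy theorems to essentially (totally real or CM) fields. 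The one thing worth flagging is that your framing slightly overstates what a ``proof proposal'' can deliver here: since the paper treats this as an axiom rather than a result, the useful observation is simply that any reader should understand Theorems~\ref{mainthm1} and \ref{mainthm2} as conditional on this conjecture (and on Conjecture~\ref{conj2} in the asymptotic case), exactly as the authors say.
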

 
 Additionally, we will use a special case of a fundamental conjecture from Langlands Programme for the asymptotic result. Note that we don't need Conjecture \ref{conj2} for Theorem \ref{mainthm2}.
 
 \begin{conj}[\cite{SS}, Conjecture 4.1]\label{conj2}
 	Let $\frakf$ be a weight 2 complex eigenform over $K$ of level $\frakN$ that is non-trivial and new.  If $K$ has some
 	real place, then there exists an elliptic curve $E_{\frakf}/K$ of conductor $\frakN$ such that 
 	\begin{equation}\label{c2eqn}
 	\#E_{\frakf}(\calO_K/\frakq)=1+\nr(\frakq)-\frakf(T_{\frakq})\quad\mbox{for all}\quad\frakq\;\nmid\;\frakN.
 	\end{equation}
 	If $K$ is totally complex, then there exists either an elliptic curve $E_{\frakf}$ of conductor $\frakN$ satisfying (\ref{c2eqn})
 	or a fake elliptic curve 
 	$A_{\frakf}/K$, of conductor $\frakN^2$, such that
 	\begin{equation}
 	\#A_{\frakf}(\calO_K/\frakq)=(1+\nr(\frakq)-\frakf(T_{\frakq}))^2\quad\mbox{for all}\quad\frakq\;\nmid\;\frakN.
 	\end{equation}
 \end{conj}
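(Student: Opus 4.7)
The statement lies at the heart of Langlands reciprocity for $\GL_2$ over number fields and is well beyond the reach of present technology; any proof plan can therefore only be a sketch of the expected strategy. The first step would be to attach to $\frakf$ a strictly compatible system of two-dimensional $\ell$-adic Galois representations
\[
\rho_{\frakf,\ell}\colon G_K \longrightarrow \GL_2(\overline{\Q}_\ell),
\]
characterised by $\tr\rho_{\frakf,\ell}(\Frob_{\frakq}) = \frakf(T_{\frakq})$ and $\det\rho_{\frakf,\ell}(\Frob_{\frakq}) = \nr(\frakq)$ for primes $\frakq\nmid\frakN\ell$. For totally real $K$ this is classical, due to Carayol, Taylor, and Blasius--Rogawski, who cut the representation out of the $\ell$-adic cohomology of a suitable Shimura curve; when $K$ has a complex place no Shimura variety for $\GL_2/K$ is available, and one would invoke instead the more recent constructions of Harris--Lan--Taylor--Thorne and Scholze via unitary Shimura varieties of higher dimension.

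The second step would be to realise $\rho_{\frakf,\ell}$ geometrically. The expected Hodge--Tate weights at each infinite place are $\{0,1\}$, so when a real place is present one would aim to produce an elliptic curve $E_{\frakf}/K$ of conductor $\frakN$ with $T_\ell E_{\frakf}\otimes \Q_\ell \simeq \rho_{\frakf,\ell}$, using a Faltings-style isogeny-theorem argument together with equality of $L$-functions. The point-counting identity
\[
\#E_{\frakf}(\calO_K/\frakq) = 1 + \nr(\frakq) - \frakf(T_{\frakq})
\]
would then follow from the standard Eichler--Shimura relation for $E_{\frakf}$ applied to $\Frob_{\frakq}$ on the Tate module.

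The main obstacle, and the source of the dichotomy in the totally complex case, is that the image of $\rho_{\frakf,\ell}$ may instead lie in the units of a nonsplit quaternion algebra $B$ over $K$. Then no elliptic curve can carry $\rho_{\frakf,\ell}$, and one must construct in its place a \emph{fake elliptic curve}: a QM abelian surface $A_{\frakf}/K$ of conductor $\frakN^2$ whose $\ell$-adic Tate module, under the $B$-action, splits as two copies of $\rho_{\frakf,\ell}$. Distinguishing the split from the nonsplit alternative purely from the automorphic datum $\frakf$, and actually producing the abelian variety in the nonsplit case without a natural Shimura model, is the hard part and indeed the essentially open core of the conjecture; this is precisely why the statement is assumed rather than proved, and why the present paper invokes it only as a hypothesis for Theorem~\ref{mainthm1}.
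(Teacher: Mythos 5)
The statement you were asked to ``prove'' is Conjecture~\ref{conj2}, quoted verbatim from \c{S}eng\"un--Siksek, and the paper offers no proof of it; it is taken as a hypothesis in Theorem~\ref{mainthm1} and Theorem~\ref{thm:pp2} precisely because a Langlands-type correspondence between Bianchi (more generally, cohomological $\GL_2$) eigenforms and elliptic curves or fake elliptic curves over general number fields is open. You correctly recognise this, and your sketch of the expected strategy --- attaching a compatible system of Galois representations to $\frakf$, realising it geometrically, and accounting for the quaternionic obstruction that forces the ``fake elliptic curve'' alternative in the totally complex case --- is a fair summary of the state of the art and of why the dichotomy in the statement arises. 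Since there is no proof in the paper to compare against, the only substantive check is whether you correctly diagnosed the situation, and you did.
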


 \subsection{Frey curve and related facts} In this section we collect some facts related to the Frey curve associated to a putative solution of the equation 
 \eqref{maineqn} and the associated Galois representation.
 
Let $G_{K} $ be the absolute Galois group of a number field $K$, let $E/K$ be an elliptic curve and $ \overline{\rho}_{E,p}$ denote the mod $p$ Galois representation of $E$.  We use $\frakq$ for an arbitrary prime of $K$, and $G_\frakq$ and $I_\frakq$ respectively for the decomposition and inertia subgroups of $G_K$ at $\frakq$.  For a putative solution $(a,b,c)$ to the equation \eqref{maineqn} with a prime exponent $p$, we associate the Frey elliptic curve as in \cite{BVY},
 \begin{equation} \label{Frey}
 E=E_{a,b,c}: Y^2+3cXY+b^pY=X^3
 \end{equation}
 whose arithmetic invariants are given by $\Delta_E=3^3(ab^3)^p$, $\displaystyle j_E=\frac{3^3c^3(9a^p+b^p)^3}{(ab^3)^p}$ and $c_4(E)=9c(9a^p+b^p), c_6(E)=-3^3(3^3c^6-2^23^2c^3b^p+2^3b^{2p})$.

 For the result below, we have the same assumptions on the number field $K$ as explained in the introduction. Namely, $K$ is a  number field of degree $d$ such that the narrow class number of $K$, $h_K^{+}=1$ and there is a unique prime $\lambda $ over $3$. 
\begin{lemma}\label{semist} 
Let $\lambda^e=3 \mathcal{O}_K$ where $\mathcal{O}_K$ is the integer ring of $K$ .
The Frey curve $E$ is semistable away from $\lambda$ and has a $K$-rational point of order $3$. 
	The determinant of $\overline{\rho}_{E,p}$ is the mod $p$ cyclotomic character.  The Galois representation $\overline{\rho}_{E,p}$ is finite flat at every prime $\frakp$ of $K$ that lies above $p$. 
	Moreover, the conductor $\mathcal{N}_E$ attached to the Frey curve $E$ is given by
	$$\mathcal{N}_E=\lambda^\epsilon\prod_{\mathfrak{q}|ab, \mathfrak{q} \nmid 3}\mathfrak{q},$$ 
	where 
	\begin{enumerate}[(i)]
		\item $\epsilon=0,1$ if $\lambda | ab$ and $p > 2e$.
		\item $\epsilon \geq 2,3$ if $\lambda \nmid ab$. Moreover if $e=1$, $\epsilon = 2,3.$  
	\end{enumerate}
	
	In particular,if $\lambda | ab$  the curve $E$ is semistable, and otherwise, the curve $E$ has additive reduction at $\lambda$.
	
	The Serre conductor $\frakN_E$ of $\overline{\rho}_{E,p}$ is supported on
	$\lambda$ and belongs to a finite set depending only on the
	field $K$.   
\end{lemma}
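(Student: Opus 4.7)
The plan is to verify each assertion by direct analysis of the Frey model (\ref{Frey}) combined with Tate's algorithm and standard facts about Galois representations of elliptic curves.

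First I would identify the $3$-torsion point: substituting $X=0$ into the Frey equation gives $Y^2 + b^p Y = 0$, so $(0,0)\in E(K)$, and since this point is a flex of the Weierstrass cubic it has order $3$. The determinant assertion $\det(\overline{\rho}_{E,p}) = \chi_p$ is immediate from the Weil pairing on $E[p]$.

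For semistability outside $\lambda$: using $c_4(E) = 9c(9a^p + b^p)$ and $\Delta_E = 3^3(ab^3)^p$, at any prime $\mathfrak{q}\nmid 3$ dividing $\Delta_E$ one must have $\mathfrak{q}\mid ab$; by primitivity $\mathfrak{q}$ divides exactly one of $a,b$ and neither of $c$, so $\mathfrak{q}\nmid 9a^p+b^p$ and hence $v_{\mathfrak{q}}(c_4)=0$. Thus the reduction at $\mathfrak{q}$ is multiplicative, contributing exactly $\mathfrak{q}$ to $\calN_E$. Finite-flatness of $\overline{\rho}_{E,p}$ at primes $\mathfrak{p}\mid p$ (distinct from $\lambda$ since $p>3$) then splits into two cases: good reduction gives it automatically, while at multiplicative reduction one has $p\mid v_{\mathfrak{p}}(\Delta_E)=p\,v_{\mathfrak{p}}(ab^3)$, and finite flatness follows from the Tate curve description.

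The delicate step is the local analysis at $\lambda$. When $\lambda\mid ab$ and $p>2e$, the hypothesis on $p$ ensures that the $\lambda$-adic contribution of $(ab^3)^p$ dominates that of $3^3$ in $\Delta_E$, so after minimization (possibly via the $K$-rational $3$-isogeny coming from the order $3$ point) Tate's algorithm places $E$ in Kodaira type $I_n$ or $I_0$, yielding $\epsilon \in \{0,1\}$. When $\lambda\nmid ab$, the direct computation $v_{\lambda}(\Delta_E)=3e$ together with $v_{\lambda}(c_4)\geq 2e$ shows additive reduction, so $\epsilon\geq 2$; the refined bound $\epsilon\in\{2,3\}$ when $e=1$ comes from classical upper bounds on conductor exponents at residue characteristic $3$ combined with the presence of a $K$-rational $3$-isogeny, which precludes the wilder Kodaira types.

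Finally, for the Serre conductor: at any prime $\mathfrak{q}\nmid 3p$ of multiplicative reduction, $p\mid v_{\mathfrak{q}}(\Delta_E)$ forces $\overline{\rho}_{E,p}|_{I_{\mathfrak{q}}}$ to be unramified, so $\frakN_E$ receives no contribution from such $\mathfrak{q}$. Since primes above $p$ are excluded from the prime-to-$p$ Artin conductor, $\frakN_E$ is supported on $\lambda$, with $v_{\lambda}(\frakN_E)$ bounded uniformly in terms of $e$, which yields finiteness. The main obstacle I foresee is the Tate-algorithm bookkeeping at $\lambda$ in the additive case: pinning down the precise range $\epsilon\in\{2,3\}$ requires carefully tracking how the valuations of $c_4, c_6, \Delta_E$ vary with $e$ and with the local valuations of $a,b,c$, mirroring the analogous computation in the rational case of signature $(p,p,3)$.
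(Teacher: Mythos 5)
Your proposal follows essentially the same route as the paper: direct computation of $c_4$, $c_6$, $\Delta_E$, the coprimality argument for semistability away from $\lambda$, the Weil pairing for the determinant, finite flatness via $p\mid v_{\frakq}(\Delta_E)$, a minimization at $\lambda$ in the case $\lambda\mid ab$, and the conductor-divides argument for the Serre conductor. Your self-contained flex argument for $(0,0)$ having order~$3$ is a nice touch where the paper simply cites \cite{BVY}. However, two points in your local analysis at $\lambda$ are imprecise enough to be misleading.

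First, the minimization when $\lambda\mid ab$ is \emph{not} accomplished ``via the $K$-rational $3$-isogeny.'' An isogeny would replace $E$ by a different curve $E/\langle(0,0)\rangle$ and change the conductor computation; what is actually needed is the elementary change of Weierstrass variables $X=3^2x$, $Y=3^3y$ on $E$ itself, which scales $(c_4,c_6,\Delta)$ by $(3^{-4},3^{-6},3^{-12})$ and, under the hypothesis $p>2e$, drives $v_\lambda(c_4)$ and $v_\lambda(c_6)$ to zero, whence the model is minimal and $\epsilon\in\{0,1\}$. This is what the paper does.

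Second, the claim that the presence of a $K$-rational $3$-isogeny ``precludes the wilder Kodaira types'' and hence gives $\epsilon\in\{2,3\}$ when $e=1$ and $\lambda\nmid ab$ is not a standard fact and is not justified. The paper instead obtains the exact exponent by reading off Papadopoulos' Tableau~III from the triple of valuations $(v_\lambda(c_4),v_\lambda(c_6),v_\lambda(\Delta_E))$, which is a concrete case-by-case computation. If you want to avoid Papadopoulos, you still need to run Tate's algorithm explicitly; asserting a qualitative consequence of the $3$-isogeny does not substitute for that bookkeeping. For the finiteness of the Serre conductor the paper also records the explicit bound $v_\lambda(\frakN_E)\le 2+3v_\lambda(3)+6v_\lambda(2)$ from Silverman, which is the cleanest way to phrase ``bounded uniformly.''
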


\begin{proof}
	Assume that the narrow class number $h_K^{+}=1$.  Recall that the invariants $c_4(E), c_6(E)$ and $\Delta_E$ of the model $E$ are given by
	$$c_4(E)= 9c(9a^p+b^p), \; c_6(E)=-3^3(3^3c^6-2^23^2c^3b^p+2^3b^{2p}) \; \Delta_E=3^3(ab^3)^p.$$

	Suppose $\frakq \neq \lambda$ divides $\Delta_E$, which implies that $ab$ is divisible by $\frakq$. Since $a,b,c$ are pairwise coprime, $\frakq$ divides either $a$ or $b$. Therefore, $c_4(E)= 9c(9a^p+b^p)$ is not divisible by $\frakq$, i.e. $v_{\frakq}(c_4(E))=0$. Hence, the given model is minimal and $E$ is semistable at $\frakq$. Moreover, we have $p|v_{\frakq}(\Delta_E)$. It follows from \cite{Serre} that $\overline{\rho}_{E,p}$ is finite flat at $\frakq$ if $\frakq$ lies above $p$. We can also deduce that $\overline{\rho}_{E,p}$ is unramified at $\frakq$ if $\frakq \nmid p$.

	Now assume that  $\lambda$ divides $ab$. Note that  $\lambda$  can only divide one of $a$ or $b$. Without loss of generality say $\lambda | b$. The result regarding $\lambda \nmid ab$ can be handled in an identical manner. 
	
	In all cases, the valuation $v_\lambda(\mathcal{N}_E)=\epsilon$ can be calculated via \cite[Tableau~III]{pap}. Note that when $\lambda | ab$, the equation is not minimal. After using the change of variables $X=3^2x, Y=3^3y$ we get $v_\lambda(c_4(E))=v_\lambda(c_6(E))=0$ when $p> 2e$, hence $\epsilon=0,1.$ 
	 
	The statement concerning the determinant is a well known consequence of the Weil pairing attached to elliptic curves. The fact that the Frey curve $E$ has a $K$-rational point of order $3$ follows from \cite[Lemma~2.1 (c)]{BVY}.
	Finally, to show that there can be only finitely many Serre conductors $\frakN_E$ note that only the prime
		$\lambda$ can divide $\frakN_E$.  As $\frakN_E$ divides the conductor $\calN_E$ of $E$, 
		$v_{\lambda}(\frakN_E)\leq v_{\lambda}(\calN_E)\leq 2+3 v_{\lambda}(3)+6 v_{\lambda}(2)$ by \cite[Theorem IV.10.4]{Sil94}.
		Hence, there can be only finitely many Serre conductors and they only depend on $K$.  
		
\end{proof}

Given a number field $K$, we obtain a \emph{complex conjugation} for every real embedding 
$\sigma: K \hookrightarrow \mathbb R$ and every extension $\tilde{\sigma}: \overline{K} \hookrightarrow \mathbb C$ of 
$\sigma$ as $\tilde{\sigma}^{-1}\iota \tilde{\sigma} \in G_K$ where $\iota$ is the usual complex conjugation.
Recall that a 
representation $\overline{\rho}_{E,p}: G_K \rightarrow \GL_2(\overline{\mathbb F}_p)$ is \emph{odd} if the determinant of every complex 
conjugation is $-1$.  If the number 
field $K$ has no real embeddings, then we immediately say that $\overline{\rho}_{E,p}$ is odd.

The following results give us information about the image of inertia groups under the Galois representation $\overline{\rho}_{E,p}$.

\begin{lemma}[\cite{FS}, Lemma 3.4]\label{image of inertia}
	Let $E$ be an elliptic curve over $K$ with $j$-invariant $j_E$.  Let $p \geq 5$ and $\mathfrak{q}\nmid p$ be a prime  
	of $K$. Then $p|\#\overline{\rho}_{E,p}(I_{\mathfrak{q}})$ if and only if $E$ has potentially multiplicative
	reduction at $\mathfrak{q}$ (i.e. $\upsilon_{\mathfrak{q}}(j_E)<0$) and $p \nmid \upsilon_{\mathfrak{q}}(j_E)$.
\end{lemma}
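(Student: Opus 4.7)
The plan is to dichotomize the argument according to the sign of $v_{\mathfrak{q}}(j_E)$, which is the standard way of distinguishing potentially good from potentially multiplicative reduction of $E$ at $\mathfrak{q}$. Since $\mathfrak{q}\nmid p$, the inertia action on the mod $p$ Tate module is controlled either by the Néron--Ogg--Shafarevich criterion or by Tate's $p$-adic uniformization, depending on the reduction case.

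First, I would treat the case $v_{\mathfrak{q}}(j_E) \geq 0$, where $E$ has potentially good reduction at $\mathfrak{q}$. Then there exists a finite extension $L/K_{\mathfrak{q}}$ over which $E$ acquires good reduction, so $I_L$ acts trivially on $E[p]$. Consequently $\overline{\rho}_{E,p}(I_{\mathfrak{q}})$ is a quotient of $\Gal(L^{\mathrm{nr}}/K_{\mathfrak{q}}^{\mathrm{nr}})$, and it embeds into the automorphism group of the reduction of $E$ over the residue field, which has order at most $24$. Since $p \geq 5$, this forces $p \nmid \#\overline{\rho}_{E,p}(I_{\mathfrak{q}})$, yielding one direction of the equivalence.

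For the remaining case $v_{\mathfrak{q}}(j_E) < 0$, I would first pass to a quadratic twist $E'$ of $E$ so that $E'$ has split multiplicative reduction at $\mathfrak{q}$; twisting does not change the $j$-invariant and alters $\overline{\rho}_{E,p}$ only by a quadratic character, so its effect on the inertia image is by a factor of at most $2$, leaving the divisibility by $p \geq 5$ intact. Tate's uniformization then supplies a parameter $q_E \in K_{\mathfrak{q}}^{\times}$ with $v_{\mathfrak{q}}(q_E)=-v_{\mathfrak{q}}(j_E)>0$ and a basis of $E[p]$ in which
\[
\overline{\rho}_{E,p}|_{G_{K_{\mathfrak{q}}}}\sim \begin{pmatrix} \chi_p & * \\ 0 & 1 \end{pmatrix}.
\]
Because $\chi_p$ is unramified at $\mathfrak{q}$, the inertia image lies in the unipotent subgroup, and the $*$-entry restricted to $I_{\mathfrak{q}}$ is computed by Kummer theory applied to $q_E$: the image of $I_{\mathfrak{q}}$ is trivial if and only if $q_E$ becomes a $p$-th power in $K_{\mathfrak{q}}^{\mathrm{nr}}$, equivalently $p\mid v_{\mathfrak{q}}(q_E)=-v_{\mathfrak{q}}(j_E)$. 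Hence when $p \nmid v_{\mathfrak{q}}(j_E)$ the image has order exactly $p$, giving the converse.

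The main technical obstacle is bookkeeping in the potentially multiplicative case: one must verify cleanly that the quadratic twist step preserves the divisibility statement (which works because the twist character has order coprime to $p$) and that the Tate parameter really has valuation $-v_{\mathfrak{q}}(j_E)$. Both are standard, but they are what allow the additive-but-potentially-multiplicative subcase to be handled uniformly with the split multiplicative one, rather than requiring a separate ramification calculation.
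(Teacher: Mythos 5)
The paper does not give a proof of this lemma; it is imported verbatim from Freitas--Siksek \cite{FS}, Lemma 3.4. Your argument is correct and is essentially the standard proof of that lemma: for $v_{\mathfrak q}(j_E)\ge 0$ the image of inertia on $E[p]$ is bounded by the automorphism group of the reduced curve (order dividing $24$, hence prime to $p\geq 5$), and for $v_{\mathfrak q}(j_E)<0$ one passes to the split multiplicative quadratic twist, uses the Tate parametrization to see $\overline\rho_{E',p}|_{G_{K_{\mathfrak q}}}$ as an extension of the trivial character by the (unramified, since $\mathfrak q\nmid p$) cyclotomic character, and reads off the Kummer cocycle on inertia, which is nontrivial precisely when $q_{E'}$ is not a $p$-th power in $K_{\mathfrak q}^{\mathrm{nr}}$, i.e.\ when $p\nmid v_{\mathfrak q}(q_{E'})=-v_{\mathfrak q}(j_E)$; twisting changes inertia image size by at most a factor of $2$, harmless against $p\geq 5$. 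This matches the Freitas--Siksek treatment, so there is nothing to flag.
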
   
By using the previous result we obtain:

\begin{lemma}\label{pot.mult.red.}
	Let $\lambda$ be the only prime ideal of $K$ lying above $3$  and let $(a,b,c)\in W_K$ with prime exponent $p>3\upsilon_{\lambda}(3)$. Let $E$ be the Frey curve in \eqref{Frey} and write $j_E$ for its $j$-invariant.  
	Then $E$ has potentially multiplicative reduction at $\lambda$ and $p|\#\overline{\rho}_{E,p}(I_{\lambda})$ where $I_{\lambda}$ denotes an inertia subgroup of $G_{K}$ at $\lambda$.
\end{lemma}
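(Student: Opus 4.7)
The plan is to apply Lemma \ref{image of inertia} at $\mathfrak{q}=\lambda$, which requires verifying two things about the $j$-invariant of the Frey curve: that $v_\lambda(j_E)<0$ and that $p\nmid v_\lambda(j_E)$. Both will follow from a single direct computation of $v_\lambda(j_E)$ using the explicit formula $j_E = 3^3 c^3(9a^p+b^p)^3/(ab^3)^p$ together with the constraints imposed by $(a,b,c)\in W_K$.

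Setting $e=v_\lambda(3)$, I first record the elementary consequences of $(a,b,c)\in W_K$: since the solution is primitive with $\lambda\mid b$, one has $\lambda\nmid a$ and $\lambda\nmid c$. The numerator of $j_E$ then contributes $3e+3\,v_\lambda(9a^p+b^p)$ to the valuation, and the denominator contributes $p\,v_\lambda(a)+3p\,v_\lambda(b)=3p\,v_\lambda(b)$. The subterm $v_\lambda(9a^p+b^p)$ is computed by comparing the two summands: $v_\lambda(9a^p)=2e$ while $v_\lambda(b^p)=p\,v_\lambda(b)\geq p>3e>2e$ by the hypothesis $p>3v_\lambda(3)$. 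Hence the summands have distinct valuations and $v_\lambda(9a^p+b^p)=2e$, giving
\[
v_\lambda(j_E) \;=\; 9e-3p\,v_\lambda(b).
\]

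Potentially multiplicative reduction at $\lambda$ is then immediate: since $v_\lambda(b)\geq 1$ and $p>3e$, one has $3p\,v_\lambda(b)\geq 3p>9e$, so $v_\lambda(j_E)<0$. For the second conclusion, Lemma \ref{image of inertia} requires $p\geq 5$ and $\lambda\nmid p$; both follow since $p>3e\geq 3$ forces $p\geq 5$, and $\lambda\mid 3$ together with $p\neq 3$ gives $\lambda\nmid p$. It then remains to check that $p\nmid v_\lambda(j_E)$. Reducing modulo $p$, $v_\lambda(j_E)\equiv 9e\pmod p$, so $p\mid v_\lambda(j_E)$ would force $p\mid 9$ or $p\mid e$. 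The former is excluded by $p\geq 5$, and the latter would force $e\geq p>3e$, a contradiction. Therefore $p\nmid v_\lambda(j_E)$, and Lemma \ref{image of inertia} yields $p\mid \#\overline{\rho}_{E,p}(I_\lambda)$.

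The argument is largely a bookkeeping exercise on $\lambda$-adic valuations; the only delicate point is using the hypothesis $p>3v_\lambda(3)$ in exactly the right places—once to force the two summands $9a^p$ and $b^p$ to have distinct $\lambda$-valuations so that the valuation of their sum equals the minimum, and once to exclude the possibility that $p$ divides the residual quantity $9e$ appearing in $v_\lambda(j_E)$. Everything else is a direct substitution into the formula for $j_E$ and an appeal to Lemma \ref{image of inertia}.
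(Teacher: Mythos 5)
Your proof is correct and follows the same overall strategy as the paper: compute $v_\lambda(j_E)$ directly from the displayed formula for $j_E$, verify that it is negative and not divisible by $p$, and invoke Lemma~\ref{image of inertia}. The one point where you diverge is the numerical value of the valuation: you obtain $v_\lambda(j_E)=9e-3p\,v_\lambda(b)$, whereas the paper records $v_\lambda(j_E)=3v_\lambda(3)-3pk$ (with $k=v_\lambda(b)$), i.e.\ $3e-3pk$. Your value is in fact the correct one --- the paper's expression omits the contribution $3\,v_\lambda(9a^p+b^p)=6e$ coming from the factor $(9a^p+b^p)^3$ in the numerator of $j_E$ (one checks $v_\lambda(9a^p)=2e<pk=v_\lambda(b^p)$ under $p>3e$, so $v_\lambda(9a^p+b^p)=2e$). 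Happily, both $3e-3pk$ and $9e-3pk$ are negative and prime to $p$ under the hypothesis $p>3v_\lambda(3)$, so the lemma's conclusion is unaffected, but your bookkeeping is the careful one; the same slip is repeated in the proof of Corollary~\ref{absirr}.
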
  

\begin{proof}
	Assume that $\lambda$ is the only prime ideal of $K$ lying above $3$ with $\upsilon_{\lambda}(b)=k$. Then 
	$\upsilon_{\lambda}(j_E)=3\upsilon_{\lambda}(3)-3pk$. Since 
	$p>3\upsilon_{\lambda}(3)$, we have $v_{\lambda}(j_E)<0$ and clearly $p \nmid \upsilon_{\lambda}(j_E)$. This implies that
	$E$ has potentially multiplicative reduction at $\lambda$ and $p| \#\overline{\rho}_{E,p}(I_{\lambda})$. 
\end{proof} 

The following well-known result about subgroups of $\GL_2(\mathbb F_p)$ will be frequently used.

\begin{theorem}\label{subgroups} Let $E$ be an  elliptic curve over a number field $K$ of degree $d$ and let $G \leq \GL_2(\mathbb F_p)$ be the
	image of the mod $p$ Galois representation of $E$.
Then the following holds:
	\begin{itemize}
		\item if $p | |G|$ then either $G$ is reducible or $G$ contains $\SL_2(\mathbb F_p)$ hence absolutely irreducible. 
		\item if $p \nmid |G|$ and $p > 15 d +1$ then $G$ is contained in a Cartan subgroup or $G$ is contained in the normalizer of Cartan subgroup but not the Cartan subgroup itself.
	
	\end{itemize}
\end{theorem}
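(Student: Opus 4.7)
The plan is to invoke Dickson's classical structure theorem for subgroups of $\GL_2(\mathbb F_p)$ and to treat the two divisibility cases separately.

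For the first bullet, assume $p \mid |G|$. By Cauchy's theorem $G$ contains an element $u$ of order exactly $p$. Because $p$ equals the characteristic, the characteristic polynomial of $u$ divides $(X-1)^p$, so $u$ has $1$ as its only eigenvalue and a single $2 \times 2$ Jordan block; hence $u$ is a transvection. I would then invoke the classical fact, due to Dickson, that a subgroup of $\GL_2(\mathbb F_p)$ containing a transvection either stabilizes the unique line fixed by that transvection --- in which case $G$ lies inside a Borel subgroup and is reducible --- or generates the full $\SL_2(\mathbb F_p)$. In the latter case $\SL_2(\mathbb F_p)$ already acts absolutely irreducibly on $\overline{\mathbb F}_p^{\,2}$, so the same holds for $G$.

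For the second bullet, suppose $p \nmid |G|$. The projection $\GL_2(\mathbb F_p) \twoheadrightarrow \PGL_2(\mathbb F_p)$ has central kernel $\mathbb F_p^{\times}\cdot I$ of order coprime to $p$, so the projective image $\overline{G} \subset \PGL_2(\mathbb F_p)$ also has order coprime to $p$. Dickson's classification of such subgroups lists five possibilities for $\overline{G}$: cyclic, dihedral, or one of the exceptional groups $A_4$, $S_4$, $A_5$. A cyclic $\overline{G}$ forces $G$ to be abelian and thus contained in a Cartan subgroup (split when the eigenvalues lie in $\mathbb F_p$, non-split otherwise). A dihedral $\overline{G}$ forces $G$ to sit inside the normalizer of some Cartan but not in the Cartan itself, the non-trivial coset corresponding to the ``reflection'' involutions.

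The main obstacle is ruling out the exceptional cases $\overline{G} \in \{A_4,S_4,A_5\}$ under the hypothesis $p > 15d+1$. The strategy is as follows. In each exceptional case $\overline{G}$ contains an element of prime order $\ell \in \{2,3,5\}$; choosing a preimage $g \in G$ one has $g^{\ell} \in Z(\GL_2(\mathbb F_p))$, which forces the ratio of the two eigenvalues of $g$ to be a non-trivial $\ell$-th root of unity in $\overline{\mathbb F}_p$. Applying this with $g = \overline{\rho}_{E,p}(\Frob_{\frakq})$ for primes $\frakq$ of $K$ not dividing $p\calN_E$, one obtains rigid mod $p$ identities relating $a_{\frakq} = \tr\overline{\rho}_{E,p}(\Frob_{\frakq})$ and $\nr(\frakq)$ (recall that $\det \overline{\rho}_{E,p}$ is the mod $p$ cyclotomic character). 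Combining these identities with the Hasse bound $|a_{\frakq}| \leq 2\sqrt{\nr(\frakq)}$, evaluated at a prime $\frakq$ whose norm is controlled by the degree $d$ via Minkowski-type estimates, produces a contradiction once $p$ exceeds $15d+1$. The explicit constant $15 = 3\cdot 5$ reflects precisely the orders of the maximal cyclic subgroups in $A_5$, which is the hardest exceptional case to eliminate.
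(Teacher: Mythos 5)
Your first bullet is correct and matches the paper's source: $p \mid |G|$ gives an element of order $p$ by Cauchy, which is necessarily unipotent (a transvection) over $\mathbb F_p$, and Dickson's classification then forces $G$ to lie in a Borel (hence reducible) or to contain $\SL_2(\mathbb F_p)$ (hence absolutely irreducible). This is essentially Swinnerton--Dyer's Lemma~2, which the paper cites.

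Your second bullet has a genuine gap, concentrated in the elimination of the exceptional cases $\overline G \in \{A_4, S_4, A_5\}$. The Frobenius-trace mechanism you describe cannot produce a contradiction: if the projective image really is exceptional, then \emph{every} element of $\overline G$ has order in $\{1,2,3,4,5\}$, and hence \emph{every} Frobenius $g = \overline\rho_{E,p}(\Frob_{\frakq})$ satisfies the congruence $\tr(g)^2 \equiv c\cdot \det(g) \pmod p$ for some $c \in \{0,1,2,4\}\cup\{(3\pm\sqrt5)/2\}$. These are precisely the constraints that cut out projective order at most five, so there is no prime $\frakq$ at which the constraint can fail; nothing is contradicted. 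Concretely, $a_{\frakq} = 0$ (the $c=0$ case) is consistent at every prime of small norm, and the Hasse bound gives no leverage against it. Moreover, you cannot simply ``choose $\frakq$'' to make $\Frob_{\frakq}$ land on a prescribed element of $\overline G$: that is a Chebotarev statement, and its effective forms would make the bound on $\nr(\frakq)$ depend on the conductor of $\overline\rho_{E,p}$, hence on $p$ itself, so the argument would not close. Minkowski bounds the discriminant of $K$, not the norm of a prime with a prescribed Frobenius.

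The actual mechanism, used in Serre's original paper and in the reference the authors cite for the second bullet, is the image of \emph{inertia at a prime $\frakp$ of $K$ above $p$}, not Frobenius away from $p$. Whatever the reduction type of $E$ at $\frakp$ (good ordinary, good supersingular, or potentially multiplicative/good), $\overline\rho_{E,p}|_{I_{\frakp}}$ contains, after restriction to a tame extension of bounded degree, a cyclic subgroup whose \emph{projective} order is at least roughly $(p\pm1)/(c\, e_{\frakp})$, where $e_{\frakp} \le d$ is the absolute ramification index and $c$ is a small absolute constant absorbing the potential-good-reduction degree. Once $p > 15d+1$, this projective order exceeds $5$, which is the largest element order in $A_4$, $S_4$ and $A_5$, and the exceptional cases are excluded. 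So the inertia at $p$ is the ingredient your proposal is missing; the cyclic/dihedral dichotomy for the remaining cases is fine.
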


\begin{proof}
	For the proof main reference is \cite[Lemma 2]{SD}. The version above including the proof of the second part is from \cite{localglobal} Propositions 2.3  and 2.6.
	\end{proof}

\section{Properties of Galois representations}

\subsection{Level Reduction}

In this section we will be relating the Galois representation attached to Frey curve with another representation of lower 
level.

\begin{theorem}\label{thm:levelred}
	Let $K$ be a number field with $h^{+}_K=1$ satisfying Conjecture \ref{conj1} and Conjecture \ref{conj2}.  Assume that $\lambda$ is the only prime of $K$ above $3$. Then there is a 
	constant $B_K$ depending only on $K$ such that the following holds. Let $(a,b,c)\in W_K$ be a non-trivial solution to the equation \eqref{maineqn} with exponent $p>B_K$. Let $E/K$ be the associated Frey curve defined in \eqref{Frey}. Then there is an
	elliptic curve $E'/K$ such that the following statements hold:
	
	\begin{enumerate}[(i)]
		\item $E'$ has good reduction away from $\lambda$.
		\item $E'$ has a $K$-rational point of order $3$.
		\item $\overline{\rho}_{E,p}\sim\overline{\rho}_{E',p}$.
		\item $v_{\lambda}(j')<0$ where $j'$ is the $j$-invariant of $E'$.
	\end{enumerate}
\end{theorem}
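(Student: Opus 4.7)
The plan is to follow the standard modular method, adapting the argument of \c{S}eng\"un--Siksek and Freitas--Siksek (as in \cite{SS, FS, IKO, KO}) to the signature $(p,p,3)$ setting. First I would establish that the mod $p$ Galois representation $\bar\rho_{E,p}$ of the Frey curve is absolutely irreducible for all $p$ exceeding some constant $B_K$ depending only on $K$. By Lemma \ref{pot.mult.red.}, since $(a,b,c)\in W_K$ and $p$ is large, $E$ has potentially multiplicative reduction at $\lambda$ and $p \mid \#\bar\rho_{E,p}(I_\lambda)$. Theorem \ref{subgroups} then forces the image either to be reducible or to contain $\SL_2(\F_p)$ (hence absolutely irreducible), so it suffices to exclude reducibility; this can be arranged for $p$ beyond an effective bound using isogeny-class bounds and the fact that a $p$-isogeny combined with the potentially multiplicative reduction at $\lambda$ is very restrictive.

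Next I would invoke Conjecture \ref{conj1}. The determinant of $\bar\rho_{E,p}$ is the cyclotomic character (Lemma \ref{semist}); since $K\supseteq\Q(\zeta_3)$ is totally complex, $\bar\rho_{E,p}$ is automatically odd; finite-flatness at primes above $p$ follows from Lemma \ref{semist}; and the Serre conductor $\frakN_E$ is supported on $\lambda$ and lies in a finite set depending only on $K$. Conjecture \ref{conj1} thus produces a weight $2$ mod $p$ eigenform $\theta$ over $K$ of level $\frakN_E$ with $\tr\bar\rho_{E,p}(\Frob_\frakq)=\theta(T_\frakq)$ for $\frakq\nmid p\frakN_E$. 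Since the set of possible $\frakN_E$ is finite, Proposition \ref{eigenform} gives a uniform bound $B_K$ such that for $p>B_K$, $\theta$ lifts to a complex eigenform $\frakf$ over $K$ of the same level.

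I would then apply Conjecture \ref{conj2}: as $K$ is totally complex, this yields either an elliptic curve $E_\frakf/K$ of conductor $\frakN_E$, or a fake elliptic curve $A_\frakf/K$ of conductor $\frakN_E^2$. The fake elliptic curve case must be excluded; the classical tool here is that the mod $p$ representation attached to $A_\frakf$ takes values in $\GL_2(\F_{p^2})$ rather than $\GL_2(\F_p)$, and together with $p \mid \#\bar\rho_{E,p}(I_\lambda)$ and the existence of a $K$-rational $3$-isogeny on the Frey curve (Lemma \ref{semist}) this produces a contradiction for $p$ large. Setting $E':=E_\frakf$, property (iii) is the defining property of $E_\frakf$ via the Hecke-eigenvalue identity, property (i) follows because $\frakN_E$ is supported on $\lambda$, and property (iv) follows by combining Lemma \ref{pot.mult.red.} ($p \mid \#\bar\rho_{E',p}(I_\lambda)$) with Lemma \ref{image of inertia}, forcing $v_\lambda(j')<0$. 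For property (ii), one transports the $3$-isogeny from $E$ to $E'$: the $K$-rational $3$-isogeny on $E$ translates into a congruence condition on $\theta(T_\frakq)\bmod 3$ at primes $\frakq$ of good reduction, and under the isomorphism $\bar\rho_{E,p}\sim\bar\rho_{E',p}$ combined with the potentially multiplicative reduction at $\lambda$, one deduces that $E'$ also admits a $K$-rational $3$-isogeny; together with $h_K^+=1$ this produces a $K$-rational point of order $3$ after possibly replacing $E'$ by its $3$-isogenous curve.

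The main obstacle is Step 1 — producing an effective $B_K$ beyond which $\bar\rho_{E,p}$ is irreducible — and the verification of property (ii), since standard level-lowering only controls the mod $p$ representation, whereas (ii) concerns the mod $3$ structure. Ruling out the fake elliptic curve $A_\frakf$ in a manner compatible with a general totally complex $K$ is the other delicate point; the hypothesis that $\lambda$ is the unique prime above $3$ and the explicit shape of $\frakN_E$ are what make the argument work.
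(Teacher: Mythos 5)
Your overall strategy matches the paper's: absolute irreducibility of $\bar\rho_{E,p}$ via Lemma~\ref{pot.mult.red.} and Theorem~\ref{subgroups} (this is Corollary~\ref{galrepsur}), then Conjecture~\ref{conj1} to get a mod~$p$ eigenform, Proposition~\ref{eigenform} to lift it, Conjecture~\ref{conj2} to get $E_\frakf$ or $A_\frakf$, ruling out the fake elliptic curve (the paper just cites \cite[Lemma 7.3]{SS} for $p>24$), and finally reading off (i), (iii), (iv). Items (i), (iii), (iv) you handle correctly.

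The one genuine gap is in your argument for (ii). You write that the $K$-rational $3$-isogeny on $E$ ``translates into a congruence condition on $\theta(T_\frakq)\bmod 3$'' and that combining this with $\bar\rho_{E,p}\sim\bar\rho_{E',p}$ lets you deduce a $3$-isogeny on $E'$. This does not work as stated: the level-lowering relation $\bar\rho_{E,p}\sim\bar\rho_{E',p}$ gives a congruence mod~$p$, and $\theta$ is a mod~$p$ eigenform, so a ``mod $3$'' condition on $\theta(T_\frakq)$ is not meaningful. The mod~$p$ congruence between $E$ and $E'$ carries no direct information about the mod~$3$ structure of $E'$. The missing ingredient is Katz's theorem (\cite[Theorem 2]{Katz}, restated here as Theorem~\ref{torsion point}): if no curve in the $K$-isogeny class of $E'$ has a $K$-rational point of order $3$, then there is a positive-density set (in particular infinitely many) of primes $\frakq$ with $\#E'(\F_\frakq)\not\equiv 0\pmod 3$. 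Fixing such a prime $\frakq\neq\lambda$ and comparing $\#E(\F_\frakq)$ with $\#E'(\F_\frakq)$ modulo $p$ (in the good reduction case; comparing traces of Frobenius in the multiplicative case) then gives a nonzero integer divisible by $p$ lying in a finite set depending only on $\frakq$ and $E'$, which bounds $p$. Finally, a $3$-isogeny $E'\to E''$ induces an isomorphism $E'[p]\cong E''[p]$ of Galois modules since $p\neq 3$, so replacing $E'$ by $E''$ preserves (iii); this is how the paper gets a genuine $K$-rational point of order $3$ rather than merely a $3$-isogeny. Without Katz's theorem, there is no mechanism to transfer the $3$-torsion from $E$ to the level-lowered curve $E'$.
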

We will give the proof of this Theorem in Section \ref{appcon} after stating the necessary lemmas. The following is Proposition 6.1 from \c{S}eng\"{u}n and Siksek \cite{SS}. We include its statement for the convenience of the reader but we will omit its proof and refer to \cite{SS} instead.
\begin{proposition}\label{irred}
	Let $L$ be a Galois number field and let $\frakq$ be a prime of $L$. There is a constant $B_{L,\frakq}$ such that 
	the following is true. Let $p > B_{L, \frakq}$ be  a rational prime. Let $E/L$ be an elliptic curve that is semistable 
	at all $\frakp | p$ and having potentially multiplicative reduction at $\frakq$. Then $\overline{\rho}_{E,p}$ is irreducible. 
	
\end{proposition}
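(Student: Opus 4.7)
The plan is to use the classical isogeny--character argument due to Mazur, as extended to general number fields by David, Momose and others, and in this form by \c{S}eng\"{u}n and Siksek. Argue by contradiction: if $\overline{\rho}_{E,p}$ is reducible, then $E$ admits a $G_L$-stable line in $E[p]$, and the action on this line is given by a continuous character $\chi : G_L \to \overline{\F}_p^\times$, while the action on the quotient is $\chi' = \chi_p/\chi$, where $\chi_p$ is the mod $p$ cyclotomic character (via the Weil pairing). The strategy is to show that the order of $\chi$ is controlled by a constant depending only on $L$ and $\frakq$, forcing a contradiction when $p$ is large.

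First I would analyse $\chi$ locally at every finite prime, uniformly in $E$. At a prime $\frakp \mid p$, the semistability of $E$ combined with Raynaud's classification of finite flat group schemes of type $(p,p)$ over $\calO_{L_\frakp}$ (valid once $p$ exceeds a constant depending only on the ramification of $p$ in $L$) forces $\chi|_{I_\frakp}$ to be either trivial or the cyclotomic character. At $\frakq$, the theory of the Tate curve applied to $E$ (which is potentially multiplicative there) shows that, up to an unramified quadratic twist, $\overline{\rho}_{E,p}|_{I_\frakq}$ is unipotent, so in particular $\chi^{2}|_{I_\frakq}$ is trivial. At any remaining prime $\frakq'$ of bad reduction, of residue characteristic $\ell \neq p$, inertia acts on $E[p]$ through a finite quotient of order dividing $24$ whenever $p > 3$ (Serre--Tate), so $\chi^{24}|_{I_{\frakq'}}$ is trivial.

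Next I would globalise via class field theory. For a universal exponent $n$ (a common multiple of $24$) and an integer $m = m(\frakp)$ determined by the choice of local type at each $\frakp \mid p$, the twist $\psi := \chi^{n}\chi_p^{-m}$ is unramified at every finite prime, hence factors through a ray class group of $L$ with conductor supported on the archimedean places; this group has finite order depending only on $L$. Since the number of possible local types at primes above $p$ is bounded by $2^{[L:\Q]}$, we obtain a universal integer $N = N(L,\frakq)$ such that $\chi^{N}$ equals a fixed power $\chi_p^{M}$ of the cyclotomic character. Restricting this identity to $I_\frakq$, where $\chi_p$ is trivial (as $\frakq \nmid p$) and where $\chi^{2}$ is trivial by the local step at $\frakq$, yields a constraint forcing $\chi$ itself to have order bounded in terms of $L$ and $\frakq$; since $\chi$ takes values in $\overline{\F}_p^\times$, this bounds $p$.

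The main obstacle is securing uniformity in the elliptic curve $E$: Raynaud's input at primes above $p$ requires $p$ to exceed a constant depending only on the ramification of $p$ in $L$, and the uniform bound $24$ at additive-reduction primes rests on the structure of the inertia action on $E[p]$ for $p$ sufficiently large, independent of $E$. Once these uniform local inputs are in place, the residual global step is a finite class-field-theoretic computation bounded purely in terms of $L$ and $\frakq$; the detailed execution is precisely the content of Proposition~6.1 of \cite{SS}, whose full proof we do not reproduce here.
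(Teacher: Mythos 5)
The paper does not actually prove Proposition~\ref{irred}: it explicitly states ``we will omit its proof and refer to \cite{SS} instead,'' so there is no internal proof to compare against. Evaluating your sketch on its own terms, the general strategy (isogeny character, local analysis at $\frakp \mid p$ via Raynaud, at $\frakq$ via the Tate curve, at other bad primes via Serre--Tate, then a class-field-theoretic globalisation) is the right framework and matches the Mazur--Momose--David school of argument that \cite{SS} follows. However, the final step contains a genuine gap.

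You obtain $\chi^N = \chi_p^M$ for bounded $N, M$, and then propose to ``restrict this identity to $I_\frakq$.'' But this yields nothing: $\chi_p$ is unramified at $\frakq$, and by construction $\chi^N$ is already known to be unramified at $\frakq$ (that was built into the choice of $N$), so restriction to $I_\frakq$ is a tautology. Moreover, even if you could somehow conclude that $\chi$ has bounded order $d$, that alone would not bound $p$: a character $G_L \to \F_p^\times$ of order $d$ exists for all $p$ with $d \mid p-1$. What actually bounds $p$ is the evaluation of the identity $\chi^N = \chi_p^M$ at \emph{Frobenius} at $\frakq$, not at inertia. The potentially multiplicative reduction at $\frakq$ is exactly what pins down $\chi|_{G_\frakq}$ (not merely $\chi|_{I_\frakq}$): up to an unramified quadratic twist, $\chi|_{G_\frakq}$ is either trivial or $\chi_p|_{G_\frakq}$, so $\chi(\Frob_\frakq) \equiv \pm 1$ or $\pm\Norm(\frakq) \pmod p$. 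Plugging this into $\chi^N(\Frob_\frakq) \equiv \Norm(\frakq)^M$ gives that $p$ divides one of a finite list of explicit nonzero integers depending only on $L$ and $\frakq$, which is the actual source of the bound. You also omit the degenerate cases $M \equiv 0$ or $M \equiv N$, where $\chi$ (or $\chi' = \chi_p\chi^{-1}$) becomes a character of bounded order and one must instead invoke uniform bounds on $p$-torsion of elliptic curves over fields of bounded degree (Merel/Kamienny/Parent-type results) to conclude. Both of these ingredients are essential and are absent from the proposal.
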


By applying the above proposition to the Frey curve, we get the following corollary.

\begin{cor}\label{galrepsur}
	Let $K$ be a number field with $h^{+}_K=1$, and suppose that $\lambda$ is the only prime of $K$ above $3$.  There is a constant $C_K$ such that if $p>C_K$ and 
	$(a,b,c)\in W_K$ is a non-trivial solution to the Fermat equation with signature $(p,p,3)$, then  $\overline{\rho}_{E,p}$ is surjective, where $E$ is the Frey curve given in \eqref{Frey}.
	\end{cor}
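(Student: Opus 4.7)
The plan is to combine irreducibility of $\bar{\rho}_{E,p}$ (coming from Proposition \ref{irred}) with the fact that $p \mid \#\bar{\rho}_{E,p}(I_\lambda)$ (coming from Lemma \ref{pot.mult.red.}), apply the first bullet of Theorem \ref{subgroups} to force $\SL_2(\F_p)$ into the image, and then upgrade to full surjectivity by analyzing the determinant.

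First, let $L$ be the Galois closure of $K/\Q$ and fix a prime $\frakq$ of $L$ above $\lambda$. Define $C_K$ to be any integer exceeding the constant $B_{L,\frakq}$ of Proposition \ref{irred}, the constant $3 v_\lambda(3)$ of Lemma \ref{pot.mult.red.}, and a bound forcing $K \cap \Q(\zeta_p) = \Q$. The last requirement is harmless: the maximal subfield of $K$ that is abelian over $\Q$ sits inside some $\Q(\zeta_N)$, so for $p > N$ one has $\Q(\zeta_p) \cap K = \Q$ and hence the image of the cyclotomic character $\chi_p : G_K \to \F_p^\times$ is all of $\F_p^\times$.

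Now fix $p > C_K$ and $(a,b,c) \in W_K$. Since $\lambda \mid b$, Lemma \ref{semist} shows that $E$ is semistable at every prime of $K$, and Lemma \ref{pot.mult.red.} shows that $E$ has potentially multiplicative reduction at $\lambda$. Base-changing $E$ to $L$ preserves both properties: semistability at every prime above $p$ follows since semistable reduction is preserved under arbitrary base extension, and potential multiplicative reduction at $\frakq$ follows from $v_\frakq(j_E) = e(\frakq/\lambda) v_\lambda(j_E) < 0$. Proposition \ref{irred} applied over $L$ then yields irreducibility of $\bar{\rho}_{E,p}|_{G_L}$; since $G_L \subseteq G_K$, any $G_K$-stable line is a fortiori $G_L$-stable, and hence $\bar{\rho}_{E,p}$ itself is irreducible.

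Setting $G = \bar{\rho}_{E,p}(G_K)$, Lemma \ref{pot.mult.red.} gives $p \mid |G|$, so the first bullet of Theorem \ref{subgroups}, combined with the irreducibility just established, yields $\SL_2(\F_p) \subseteq G$. By Lemma \ref{semist} the determinant of $\bar{\rho}_{E,p}$ is $\chi_p$, whose image is all of $\F_p^\times$ by the choice of $C_K$; combining this with $\SL_2(\F_p) \subseteq G$ produces $G = \GL_2(\F_p)$. The only mild subtlety is the reduction to the Galois base $L$ in order to legitimately invoke Proposition \ref{irred} and the accompanying descent of irreducibility back to $K$; once those are in hand, the subgroup classification and the determinant calculation finish the proof routinely.
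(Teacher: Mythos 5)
Your proof is correct and follows essentially the same route as the paper: invoke Proposition~\ref{irred} over the Galois closure $L$ to obtain irreducibility, use Lemma~\ref{pot.mult.red.} to put an element of order $p$ in the image, apply Theorem~\ref{subgroups} to get $\SL_2(\F_p)$, and finish by the surjectivity of the cyclotomic determinant. You are somewhat more explicit than the paper about why the hypotheses of Proposition~\ref{irred} survive base change to $L$ and why irreducibility over $G_L$ descends to $G_K$, but these are exactly the (implicit) steps the paper relies on.
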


\begin{proof}
	By Lemma \ref{pot.mult.red.}, $E$ has potentially multiplicative reduction at $\lambda$.  Also, $E$ is semistable
	away from $\lambda$ from Lemma \ref{semist}.  Let $L$ be the Galois closure of $K$, and let $\frakq$ be a prime
	of $L$ above $\lambda$.  Now, by applying Proposition \ref{irred}, we get a constant $B_{L,\frakq}$ such that 
	$\overline{\rho}_{E,p}(G_L)$ is irreducible whenever $p>B_{L,\frakq}$.  Note that there are only finitely many choices of $\frakq|3$ in $L$ and $L$ only depends on $K$.  Hence, we can obtain a constant depending only on $K$ and we denote it by $C_K$.
	If necessary, enlarge $C_{K}$ so that $C_{K}>3v_{\lambda}(3)$.  Now, we apply Lemma \ref{pot.mult.red.} and see that
	the image of $\overline{\rho}_{E,p}$ contains an element of order $p$.  By Theorem \ref{subgroups} any subgroup of $\GL_2(\F_p)$ 
	having an element of order $p$ is either reducible or contains $\SL_2(\F_p)$. As $p>C_K>3v_{\lambda}(3)$, the image
	contains $\SL_2(\F_p)$.  Finally, we can ssume that $K\cap\Q(\zeta_p)=\Q$ by taking $C_K$ large enough if needed.
	Hence, $\chi_p=\det(\overline{\rho}_{E,p})$ is surjective.
\end{proof}
\subsubsection{\bfseries{Proof of Theorem \ref{thm:levelred}}}
\label{appcon}

	In this subsection, Theorem \ref{thm:levelred} will be proven.  The proof closely follows the ideas in \cite{KO}, however we will give it here for the sake of completeness and for the convenience of the reader.  We continue with the notations introduced in the statement of
	Theorem \ref{thm:levelred} and the assumptions of the theorem.
	\begin{lemma}\label{reduced}
		There is a non-trivial, new (weight 2) complex eigenform $\frakf$ which has an associated elliptic curve 
		$E_{\frakf}/K$ of conductor $\frakN'$ dividing $\frakN_E$.
	\end{lemma}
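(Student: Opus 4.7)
The plan is to apply Conjecture~\ref{conj1} to $\overline{\rho}_{E,p}$ to produce a mod $p$ eigenform of predicted level $\mathfrak{N}_E$, lift it to a complex eigenform via Proposition~\ref{eigenform}, pass to its associated newform, and then invoke Conjecture~\ref{conj2} to obtain the desired elliptic curve $E_\mathfrak{f}$.

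First I would verify the hypotheses of Conjecture~\ref{conj1}. Oddness of $\overline{\rho}_{E,p}$ is automatic because $K \supseteq \Q(\zeta_3)$ has no real embeddings. Irreducibility (in fact surjectivity) of $\overline{\rho}_{E,p}$ follows from Corollary~\ref{galrepsur} provided $p > C_K$. Lemma~\ref{semist} supplies both that $\det(\overline{\rho}_{E,p})$ equals the mod $p$ cyclotomic character and that $\overline{\rho}_{E,p}$ is finite flat at every prime of $K$ above $p$. The remaining requirement that $p$ be unramified in $K$ holds once $B_K$ is enlarged past the finitely many primes that ramify in $K$.

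Applying Conjecture~\ref{conj1} then yields a weight $2$ mod $p$ eigenform $\theta$ over $K$ of level equal to the Serre conductor $\mathfrak{N}_E$. By Lemma~\ref{semist}, $\mathfrak{N}_E$ is supported only on $\lambda$ and lies in a finite set depending solely on $K$. Consequently the bound $B(\mathfrak{N}_E)$ from Proposition~\ref{eigenform} can be taken to be the maximum over this finite set, so after enlarging $B_K$ the eigenform $\theta$ lifts to a complex weight $2$ eigenform $\mathfrak{f}'$ of level $\mathfrak{N}_E$. Replacing $\mathfrak{f}'$ by its associated newform produces a new complex eigenform $\mathfrak{f}$ of some level $\mathfrak{N}' \mid \mathfrak{N}_E$ satisfying $\overline{\rho}_{\mathfrak{f},p} \sim \overline{\rho}_{E,p}$; non-triviality of $\mathfrak{f}$ is forced by the fact that $\theta$ cannot be Eisenstein, since $\overline{\rho}_{E,p}$ is (absolutely) irreducible.

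Finally I would invoke Conjecture~\ref{conj2}. Because $K$ contains $\Q(\zeta_3)$ it is totally complex, so the conjecture a priori offers two alternatives: an elliptic curve $E_\mathfrak{f}/K$ of conductor $\mathfrak{N}'$, or a fake elliptic curve $A_\mathfrak{f}/K$ of conductor $(\mathfrak{N}')^2$. The main obstacle is ruling out the fake elliptic curve. Here the key input is Lemma~\ref{pot.mult.red.}: $\overline{\rho}_{E,p}(I_\lambda)$ contains an element of order $p$, reflecting the potentially multiplicative reduction of $E$ at $\lambda$. A fake elliptic curve, being a QM abelian surface, has potentially good reduction at every finite place, so its mod $p$ representation has finite image on each inertia subgroup of order prime to $p$; it therefore cannot exhibit unipotent inertia at $\lambda$ of order divisible by $p$. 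Since the two mod $p$ representations must agree, this contradiction excludes $A_\mathfrak{f}$ and leaves the genuine elliptic curve $E_\mathfrak{f}/K$ of conductor $\mathfrak{N}' \mid \mathfrak{N}_E$, as required.
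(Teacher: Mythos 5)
Your proposal follows the paper's line of argument faithfully and is essentially correct: verify the hypotheses of Conjecture~\ref{conj1} (oddness, irreducibility via Corollary~\ref{galrepsur}, determinant and finite flatness via Lemma~\ref{semist}), lift the resulting mod $p$ eigenform using Proposition~\ref{eigenform} and the finiteness of possible Serre conductors from Lemma~\ref{semist}, pass to a newform, and invoke Conjecture~\ref{conj2}. There is one step the paper includes that you omit: before applying Conjecture~\ref{conj2}, one must reduce to the case where the Hecke eigenvalue field $\Q_{\frakf}$ equals $\Q$, since the displayed compatibility $\#E_{\frakf}(\calO_K/\frakq)=1+\nr(\frakq)-\frakf(T_{\frakq})$ only makes sense when $\frakf(T_{\frakq})$ is a rational integer. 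The paper handles this by citing \cite{SS}, Lemma 7.2, which gives a bound $p < C_{\frakf}$ when $\Q_{\frakf}\neq\Q$, so enlarging $B_K$ disposes of that case; you should insert this step. Conversely, where the paper simply cites Lemma~\ref{fake} (i.e.\ \cite{SS}, Lemma 7.3) to discard the fake elliptic curve alternative, you re-derive the content of that lemma directly: a QM abelian surface has potentially good reduction at every finite place, so the inertial image at $\lambda$ in its mod $p$ representation has order prime to $p$ for $p$ large, contradicting the unipotent element of order $p$ in $\overline{\rho}_{E,p}(I_\lambda)$ coming from Lemma~\ref{pot.mult.red.}. That is exactly the reasoning behind the cited lemma, and spelling it out is a legitimate (and arguably more transparent) alternative to the paper's bare citation; just note that to make it quantitative one needs the standard bound on the order of inertia acting on the Tate module of a QM surface, which is where the explicit threshold $p>24$ in Lemma~\ref{fake} comes from.
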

	\begin{proof}
		We first show the existence of such an eigenform $\frakf$ of level $\frakN_E$ supported only on $\{\lambda\}$.
		
		By Corollary \ref{galrepsur}, the representation $\overline{\rho}_{E,p}:G_K\rightarrow GL_{2}(\mathbb{F}_p) $ is surjective 
		hence is absolutely irreducible for $p>C_K$.  Now, we apply  Conjecture \ref{conj1} to deduce that there is 
		a weight $2 \mod p$ eigenform $\theta$ 
		over $K$ of level $\frakN_E$, with $\frakN_E$ as in Lemma \ref{semist}, such that for all primes $\frakq$ coprime to $p\frakN$, we have 
		\[\tr(\overline{\rho}_{E,p}(\Frob_{\frakq}))=\theta(T_{\frakq}).\]
		We also know from the same lemma that there are only finitely many possible levels $\frakN$.  Thus by taking $p$
		large enough, see Proposition \ref{eigenform}, for any level $\frakN$
		there is a weight 2 complex eigenform $\frakf$ with level $\frakN$ which is a lift of $\theta$.
		Note that since there are only finitely many such eigenforms $\frakf$
		and they depend only on $K$, from now on we can suppose that every constant depending on these eigenforms depends
		only on $K$.
		
		Next, we recall that if $\Q_{\frakf}\neq\Q$ then  there is a constant $C_{\frakf}$ depending only on $\frakf$ such that 
		$p<C_{\frakf}$ [\cite{SS}, Lemma 7.2].  Therefore, by taking $p$ sufficiently large, we assume that $\Q_{\frakf}=\Q$. 
		In order to apply Conjecture \ref{conj2}, we need to
		show that $\frakf$ is non-trivial and new.  As $\overline{\rho}_{E,p}$ is irreducible, the eigenform $\frakf$ is non-trivial.  
		If $\frakf$ is new, we are done.  If not, we can replace it with an equivalent new eigenform of smaller level.  Therefore,
		we can take $\frakf$ new with level $\frakN'$ dividing $\frakN_E$.
		Finally, we apply Conjecture \ref{conj2} and obtain that $\frakf$ either has an associated elliptic curve $E_{\frakf}/K$ 
		of conductor $\frakN'$, or has an associated fake elliptic curve $A_{\frakf}/K$ of conductor $\frakN_E^2$.
		
		By Lemma \ref{fake} below, if $p>24$, then $\frakf$ has an associated elliptic curve $E_{\frakf}$. As a result, we can assume that $\overline{\rho}_{E,p}\sim \overline{\rho}_{E',p}$ where $E'=E_{\frakf}$ is an elliptic
		curve with conductor $\frakN'$ dividing $\frakN_E$.  

	\end{proof}
	\begin{lemma}[\cite{SS}, Lemma 7.3]\label{fake}
		If $p>24$, then $\frakf$ has an associated elliptic curve $E_{\frakf}$.
	\end{lemma}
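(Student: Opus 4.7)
The plan is to argue by contradiction, assuming $\frakf$ has no associated elliptic curve. Then by Conjecture \ref{conj2} (second alternative, since $K$ must be totally complex in this case), $\frakf$ has an associated fake elliptic curve $A_\frakf/K$ of conductor $\frakN_E^2$: that is, an abelian surface whose $K$-rational endomorphism algebra is an indefinite division quaternion algebra $B/\Q$. Since $A_\frakf$ inherits its data from $\frakf$, and the newforms $\frakf$ arising live in a finite set depending only on $K$, we may assume (at the cost of enlarging the eventual bound) that $p$ is coprime to $\disc(B)$. Under this assumption $\calO_B \otimes \F_p \cong \M_2(\F_p)$, so $A_\frakf[p]$ decomposes as $V \oplus V$ for a $2$-dimensional $\F_p[G_K]$-module $V =: \bar\rho_{A_\frakf,p}$. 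The Hecke-eigenvalue comparison in Conjecture \ref{conj2}, combined with the absolute irreducibility of $\bar\rho_{E,p}$ from Corollary \ref{galrepsur}, yields an isomorphism $\bar\rho_{E,p} \sim \bar\rho_{A_\frakf,p}$.

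The crux of the argument is the well-known rigidity fact that a fake elliptic curve has potential good reduction at every finite place of $K$. Indeed, the quaternionic action must extend to the identity component of the N\'eron model, but the endomorphism algebra of an algebraic torus over a separably closed field is commutative, so the toric rank in any semistable model must vanish. Applied at $\lambda$, this produces a finite Galois extension $L/K_\lambda$ over which $A_\frakf$ acquires good reduction, and the image $\bar\rho_{A_\frakf,p}(I_\lambda)$ factors through $\Gal(L/K_\lambda)$. The requirement that the extension preserve the QM structure bounds $[L:K_\lambda]$, hence $\#\bar\rho_{A_\frakf,p}(I_\lambda)$, by $24$ — in parallel with the classical bound for the minimal extension over which an elliptic curve with potential good reduction acquires good reduction.

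The contradiction now follows cleanly from Lemma \ref{pot.mult.red.}: the Frey curve $E$ has potentially multiplicative reduction at $\lambda$ and $p \mid \#\bar\rho_{E,p}(I_\lambda)$. Through the isomorphism $\bar\rho_{E,p} \sim \bar\rho_{A_\frakf,p}$ we would then need $p \mid \#\bar\rho_{A_\frakf,p}(I_\lambda)$, forcing $p \mid 24$, which is impossible for $p > 24$.

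The main obstacle I anticipate is justifying the universal bound $24$ in the mixed-characteristic setting $\lambda \mid 3$: one must verify that the local analysis of inertia on the $p$-torsion of a QM abelian surface with potential good reduction behaves exactly as for elliptic curves, and is not spoiled by residue characteristic $3$. This is the QM analogue of the classical local classification, and it is the step where one invokes the explicit reference \cite{SS}, together with the standing assumption (absorbed into $C_K$, and hence into the final $B_K$) that $p$ is large enough for the decomposition $\calO_B \otimes \F_p \cong \M_2(\F_p)$ to hold.
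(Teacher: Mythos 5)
The paper gives no proof of this lemma: it is quoted verbatim from \cite{SS}, Lemma~7.3, and the authors rely on \c{S}eng\"un--Siksek's proof. Your reconstruction follows precisely that argument's architecture — invoke the second alternative of Conjecture~\ref{conj2} to obtain a fake elliptic curve $A_\frakf$ (forcing $K$ totally complex), use the trace comparison plus absolute irreducibility of $\overline{\rho}_{E,p}$ (Corollary~\ref{galrepsur} and Brauer--Nesbitt/Chebotarev) to identify $\overline{\rho}_{E,p}$ with the two-dimensional constituent of $A_\frakf[p]$, exploit the fact that QM abelian surfaces have potential good reduction at every finite place to bound $\#\overline{\rho}_{A_\frakf,p}(I_\lambda)$ by $24$, and contradict Lemma~\ref{pot.mult.red.}. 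So your approach is the same as the cited source.

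There is, however, a misstatement in your justification of potential good reduction. You assert that ``the endomorphism algebra of an algebraic torus over a separably closed field is commutative,'' but this is false once the torus has rank $\geq 2$: $\End(\G_m^2) \cong \M_2(\Z)$ is non-commutative. The correct argument is that the quaternionic order $\calO_B$ would have to act on the toric part $T$ of the semistable reduction, giving an embedding of the division algebra $B$ into $\End(T)\otimes\Q \cong \M_r(\Q)$ where $r \leq 2$ is the toric rank. A four-dimensional division $\Q$-algebra cannot embed in $\Q$ (the case $r=1$), and cannot embed in $\M_2(\Q)$ either (the case $r=2$: a dimension count would force $B\cong\M_2(\Q)$, splitting $B$). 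Hence $r=0$ and $A_\frakf$ has potential good reduction. The conclusion you reach is correct and standard, so the slip does not break the proof, but the stated reason for it is wrong. One further remark: your aside reducing to $p\nmid\disc(B)$ (at the cost of enlarging the bound) is a reasonable caution but is not really compatible with the clean statement ``$p>24$''; the cleaner route, as in \cite{SS}, is to observe that if $A_\frakf$ acquires good reduction over $L/K_\lambda$ with $[L:K_\lambda]\mid 24$, then $I_\lambda$ acts on all of $A_\frakf[p]$ (and hence on any subquotient, in particular on $\overline{\rho}_{E,p}$) through $\Gal(L/K_\lambda)$, so the inertia image has order dividing $24$ irrespective of how $\calO_B\otimes\F_p$ decomposes.
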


	We can now give the proof of Theorem \ref{thm:levelred}.\\

	\begin{proof3}
		Lemma \ref{fake} gives us that if $p>24$, then $\frakf$ has an associated elliptic curve $E_{\frakf}$.
		Therefore, by Lemma \ref{reduced} we can assume that $\overline{\rho}_{E,p}\sim\overline{\rho}_{E',p}$ where $E'=E_{\frakf}$ is an
		elliptic curve
		of conductor $\frakN'$ dividing $\frakN_E$.
		
		\begin{lemma}\label{3-torsion}
			If $E'$ does not have a non-trivial $K$-rational point of order $3$ and is not isogenous to an elliptic curve with a non-trivial $K$-rational point of order $3$ then $p<C_{E'}$.
		\end{lemma}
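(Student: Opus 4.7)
The plan is to extract, from the existence of a $K$-rational point of order $3$ on $E$, a congruence modulo $3$ on its Hecke eigenvalues, and then to transfer this congruence modulo $p$ to $E'$ via the isomorphism $\overline{\rho}_{E,p}\sim\overline{\rho}_{E',p}$; this produces a bound on $p$ at any fixed prime where $E'$ violates the analogous congruence. Since $E$ carries a $K$-rational subgroup of order $3$ (Lemma \ref{semist}), the semisimplification of $\overline{\rho}_{E,3}$ contains the trivial character, so that
$$a_\frakq(E)\equiv 1+\nr(\frakq)\pmod 3$$
for every prime $\frakq\nmid 3$ of good reduction for $E$.

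Under the hypothesis on $E'$, neither $E'$ nor any curve in its $K$-isogeny class has a $K$-rational point of order $3$; equivalently, the semisimplification of $\overline{\rho}_{E',3}$ contains no trivial constituent. A Chebotarev argument applied to $\overline{\rho}_{E',3}$ then produces a prime $\frakq_0$ of $K$, depending only on $E'$, with $\frakq_0\nmid 3$ and $E'$ of good reduction at $\frakq_0$, such that
$$a_{\frakq_0}(E')\not\equiv 1+\nr(\frakq_0)\pmod 3.$$
I would choose $C_{E'}$ large enough that $p>C_{E'}$ forces $\frakq_0\nmid p$. Then $\overline{\rho}_{E,p}\sim\overline{\rho}_{E',p}$ gives
$$\tr\overline{\rho}_{E,p}(\Frob_{\frakq_0})\equiv a_{\frakq_0}(E')\pmod p.$$
If $E$ has good reduction at $\frakq_0$, the left side equals $a_{\frakq_0}(E)$, and combining with the displayed $E$-congruence shows that $a_{\frakq_0}(E)-a_{\frakq_0}(E')$ is a nonzero integer divisible by $p$. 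The Hasse bound $|a_{\frakq_0}(E)-a_{\frakq_0}(E')|\leq 4\sqrt{\nr(\frakq_0)}$ then forces $p\leq 4\sqrt{\nr(\frakq_0)}$, a quantity depending only on $E'$.

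The main technical obstacle is that $\frakq_0$ is fixed but $E$ depends on the putative solution $(a,b,c)$, so $\frakq_0$ may well be a prime of bad reduction for $E$. By Lemma \ref{semist}, $E$ is semistable away from $\lambda$, and $\frakq_0\neq\lambda$ since $\frakq_0\nmid 3$; hence any bad reduction of $E$ at $\frakq_0$ is multiplicative, and a short Tate-curve analysis gives $\tr\overline{\rho}_{E,p}(\Frob_{\frakq_0})\equiv\varepsilon\bigl(1+\nr(\frakq_0)\bigr)\pmod p$ for some $\varepsilon\in\{\pm 1\}$. Both integers $\varepsilon\bigl(1+\nr(\frakq_0)\bigr)-a_{\frakq_0}(E')$ are nonzero---for $\varepsilon=+1$ because the difference is nonzero modulo $3$, and for $\varepsilon=-1$ because $|a_{\frakq_0}(E')|\leq 2\sqrt{\nr(\frakq_0)}<1+\nr(\frakq_0)$---so in either case $p$ divides a nonzero integer bounded explicitly in terms of $\nr(\frakq_0)$. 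Setting $C_{E'}$ to be the maximum of these bounds (together with $\nr(\frakq_0)$ itself) completes the argument.
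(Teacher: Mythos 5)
Your proof follows essentially the same route as the paper's: find a prime $\frakq_0$ (depending only on $E'$) at which $\#E'(\F_{\frakq_0})\not\equiv 0\pmod 3$, then compare traces of Frobenius at $\frakq_0$ via $\overline{\rho}_{E,p}\sim\overline{\rho}_{E',p}$, splitting into good versus multiplicative reduction for $E$. The only difference is that the paper produces $\frakq_0$ by citing a theorem of Katz (Theorem \ref{torsion point}), whereas you sketch the same conclusion by a Chebotarev argument on $\overline{\rho}_{E',3}$ — which is precisely the $m=3$ case of Katz's result — and you are somewhat more explicit about why the resulting integers are nonzero in the multiplicative case.
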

		
		\begin{proof}
			By Theorem \ref{torsion point} there are infinitely many primes $\frakq$ such that $\# E'(\F_\frakq)\not \equiv 0 \pmod 3$.  Fix such a prime $\frakq\neq \lambda$ and note that $E$ is semistable at $\frakq$.  If $E$ has good reduction at $\frakq$, then $\# E(\F_\frakq)\equiv \# E'(\F_\frakq) \pmod p$.  Since $3|\# E(\F_\frakq)$ the difference, which is divisible by $p$, is nonzero.  As the difference belongs to a finite set depending on $\frakq$, $p$ becomes bounded.  If $E$ has multiplicative reduction at $\frakq$, we obtain 
			\[
			\pm(\Norm(\frakq)+1)\equiv a_{\frakq}(E') \pmod p
			\]
			by comparing the traces of Frobenius.  We see that this difference being also nonzero and depending only on $\frakq$ gives a bound for $p$.
		\end{proof}

		Now, suppose $E'$ is $3$-isogenous to an elliptic curve $E''$.  As the isogeny induces an isomorphism
		$E'[p]\cong E''[p]$ of Galois modules ($p\neq 3$), we get $\overline{\rho}_{E,p}\sim \overline{\rho}_{E',p}\sim\overline{\rho}_{E'',p}$ 
		completing the proof of (iii).  After possibly replacing $E'$ by $E''$, we can suppose that $E'$ has a $K$-rational point of order $3$
		giving us (ii).
		
		It remains to prove  $v_{\lambda}(j')<0$ where $j'$ is the $j$-invariant of $E'$.   By Lemma 5.2 in \cite{SS}, $p$ divides the size of $\overline{\rho}_{E,p}(I_{\lambda})$. 
		Now, Lemma 5.1 in \cite{SS} implies that $v_{\lambda}(j')<0$ since the sizes of  
		$\overline{\rho}_{E,p}(I_{\lambda})$ and $\overline{\rho}_{E',p}(I_{\lambda})$ are equal.  
		
	\end{proof3}
	
	The following theorem of Katz is used in the proof of the above lemma.

	\begin{theorem}[\cite{Katz},Theorem 2]\label{torsion point}
		Let $E$ be an elliptic curve over a number field $K$, and $m\geq 2$ an integer. For each prime $\mathfrak{p}$ of $K$ at which $E$ has good reduction let $N(\mathfrak{p})$ denote the number of $\mathbb{F}_{\mathfrak{p}}$-rational points on $E\mod{\mathfrak{p}}$. If we have 
		$$N(\mathfrak{p}) \equiv 0 \pmod{m}$$
		for a set of primes $\mathfrak{p}$ of density one in $K$, then there exists a $K$-isogenous elliptic curve $E'$ defined over $K$ such that $$\#(\Tors E'(K)) \equiv 0 \pmod{m}.$$
\end{theorem}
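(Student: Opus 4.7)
The plan is to route the argument through the mod $m$ Galois representation of $E$ combined with Chebotarev and a Brauer--Nesbitt-style analysis. By the Chinese Remainder Theorem applied to $m$ and composability of $K$-isogenies, it suffices to prove the theorem for $m = \ell^n$ a prime power. Let $\bar\rho_{E,\ell^n}\colon G_K \to \GL(E[\ell^n]) \isom \GL_2(\Z/\ell^n\Z)$ denote the mod $\ell^n$ Galois representation and $G$ its image. For any prime $\mathfrak{p}$ of good reduction of $E$ coprime to $\ell$,
\[
\#E(\F_\mathfrak{p}) \equiv \det(I - \bar\rho_{E,\ell^n}(\Frob_\mathfrak{p})) \pmod{\ell^n}.
\]
Because the set of conjugacy classes of $G$ on which $\det(I-g) \not\equiv 0$ would correspond to a set of primes of positive density via Chebotarev, the density-one hypothesis forces every $g \in G$ to satisfy $\det(I - g) \equiv 0 \pmod{\ell^n}$.

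I first handle the base case $n = 1$: the vanishing condition is equivalent to $\tr(g) = 1 + \det(g)$ for all $g \in G$, so $\bar\rho_{E,\ell}$ shares its trace function with the semisimple representation $\mathbf{1} \oplus \chi$, where $\chi = \det \bar\rho_{E,\ell}$ is the mod $\ell$ cyclotomic character. By the Brauer--Nesbitt theorem, the semisimplification of $\bar\rho_{E,\ell}$ is precisely $\mathbf{1} \oplus \chi$, so $\bar\rho_{E,\ell}$ admits a $G_K$-stable filtration $0 \subset W \subset E[\ell]$ whose two graded pieces are, in some order, the trivial character and $\chi$. If the trivial character appears as $W$, then $W \subseteq E(K)$ already provides a nonzero $K$-rational $\ell$-torsion point on $E$ itself. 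Otherwise the quotient $E[\ell]/W$ is $G_K$-trivial, and the $K$-isogeny $\phi\colon E \to E' := E/W$ realizes $E[\ell]/W$ as a $K$-rational cyclic subgroup of order $\ell$ inside $E'(K)$, producing the desired torsion on an isogenous curve.

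For general $n \geq 2$, I would induct on $n$: apply the base case to extract an $\ell$-torsion point on an isogenous curve $E^{(1)}$, and then work inside $E^{(1)}[\ell^n]$ modulo the subgroup just produced in order to lift the filtration one level higher. The density-one hypothesis propagates along the tower because $K$-isogenous elliptic curves share point counts modulo any fixed integer for all but finitely many primes. Composing the isogenies produced for each prime power $\ell^n$ exactly dividing $m$ then yields the final $E'$ with $m \mid \#\Tors E'(K)$.

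The main obstacle is making the inductive lift from $\ell^{n-1}$ to $\ell^n$ rigorous: Brauer--Nesbitt applies cleanly over the field $\F_\ell$ but not directly over the ring $\Z/\ell^n\Z$, so one must either extend the semisimplification machinery to $\ell^n$-level coefficients or iterate the mod-$\ell$ analysis while carefully tracking how $G_K$-stable sublattices in $T_\ell(E)$ evolve under successive isogenies. Katz's original approach sidesteps this difficulty by working $\ell$-adically on the full rational Tate module $V_\ell(E) = T_\ell(E) \otimes_{\Z_\ell} \Q_\ell$, where the required isogeny can be read off in one stroke as the quotient by the maximal $G_K$-stable lattice whose image in the associated graded carries the trivial character, bypassing the delicate iterative bookkeeping altogether.
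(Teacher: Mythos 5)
Your reduction to prime powers via the Chinese Remainder Theorem and the base case $n=1$ are correct: Chebotarev forces $\det\bigl(I - \overline{\rho}_{E,\ell}(g)\bigr) = 0$ for every $g$ in the image, which rewrites as $\tr(g) = 1 + \det(g)$, and Brauer--Nesbitt then identifies the semisimplification of $\overline{\rho}_{E,\ell}$ with $1 \oplus \chi_\ell$; the two-case analysis (trivial character as sub versus as quotient, passing to $E/W$ in the latter case) correctly produces $\ell$-torsion on a $K$-isogenous curve. In fact, since the paper only ever invokes the cited theorem with $m = 3$, your argument is adequate for the application at hand.

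As a proof of the stated theorem for general $m$, however, the step from $\ell$ to $\ell^n$ with $n \geq 2$ is a genuine gap, and you are right to flag it. The condition $\det(I - g) \equiv 0 \pmod{\ell^n}$ on the image of $\overline{\rho}_{E,\ell^n}$ does not by itself yield a $G_K$-stable free rank-one $\Z/\ell^n\Z$ submodule or quotient of $E[\ell^n]$ with trivial action, and iterating the mod-$\ell$ semisimplification one level at a time loses control of which $G_K$-stable lattice inside $V_\ell(E)$ carries the required structure after each isogeny. Closing this requires exactly the lattice-theoretic input that is Katz's Theorem~1 in the cited paper, from which his Theorem~2 (the statement here) is deduced: for a closed subgroup $\Gamma$ of $\GL_2(\Z_\ell)$ satisfying $\det(1-\gamma) \equiv 0 \pmod{\ell^n}$ for all $\gamma$, there is a $\Gamma$-stable lattice whose reduction mod $\ell^n$ has $\Gamma$-invariants (or coinvariants) of order at least $\ell^n$. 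You correctly gesture at this in your closing paragraph but do not carry it out; since the paper itself supplies no proof---it simply cites Katz---the missing lattice lemma is precisely what you would need to import or reprove to complete the argument.
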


 \subsection{Irreducibility of Galois Representations}

 Throughout this section $K=\Q(\sqrt{-d})$, where $d \in \{1,7,19,43,67\}$, $(a,b,c) \in \mathcal O_K$ is a nontrivial, primitive, putative solution of the equation $x^p+y^p=z^3$. 

 The idea of this section is to prove that when $p$ is bigger than an explicit constant then the mod $p$ Galois representation $\overline{\rho}_{E,p}:G_\Q \rightarrow \Aut(E[p]) \cong \GL_2(\F_p)$ attached to $E$ is absolutely irreducible.
 
 We will use the following result of Freitas and Siksek:
 
 \begin{lemma}[\cite{FSANT}, Lemma 6.3]\label{FSlemma} Let $E$ be an elliptic curve over a number field $K$ with conductor $\mathcal{N}_E$, let $p$ be a prime $>5$. 	Suppose that $\rho_p=\overline{\rho}_{E,p}$ is reducible. Write $\rho_p \sim  \begin{pmatrix}
 		\theta     & * \\
 		0      & \theta' 
 	\end{pmatrix} $ where $\theta, \theta': G_{K} \rightarrow \mathbb F_p^*$ are the \emph{isogeny characters.} Let $\mathcal N_{\theta}, \mathcal N_{\theta'}$ denote the conductors of these characters. Fix a prime $\mathfrak q \nmid p$ of $\mathcal O_K.$ 
 	
 	We have the following:
 	\begin{itemize}
 		\item if $E$ has good or multiplicative reduction at $\mathfrak q$, then $v_{\mathfrak q}(\mathcal N_{\theta})=v_{\mathfrak q}(\mathcal N_{\theta'})=0$
 		\item if $E$ has additive reduction at $\mathfrak q$, then $v_{\mathfrak q}(\mathcal N_E)$ is even and $v_{\mathfrak q}(\mathcal N_{\theta})=v_{\mathfrak q}(\mathcal N_{\theta'})=v_{\mathfrak q}(\mathcal N_E)/2.$
 	\end{itemize}
 \end{lemma}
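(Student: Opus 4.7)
The strategy is to treat the three reduction types of $E$ at $\mathfrak{q}$ separately, and in the additive case combine additivity of the Artin conductor with the constraint imposed by the determinant being the mod $p$ cyclotomic character.

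For good reduction, N\'eron--Ogg--Shafarevich together with $\mathfrak{q} \nmid p$ shows that $\overline{\rho}_{E,p}$ is unramified at $\mathfrak{q}$, so both diagonal characters $\theta,\theta'$ have trivial conductor at $\mathfrak{q}$. For multiplicative reduction, the Tate curve description of $E$ over $K_{\mathfrak{q}}$ gives (up to an unramified quadratic twist) a short exact sequence $0 \to \mu_p \to E[p] \to \mathbb{Z}/p\mathbb{Z} \to 0$ of $G_{\mathfrak{q}}$-modules; since $\mathfrak{q} \nmid p$, both $\mu_p$ and $\mathbb{Z}/p\mathbb{Z}$ are unramified at $\mathfrak{q}$, and because $\mu_p$ is the unique $I_{\mathfrak{q}}$-stable line in $E[p]$, the upper-triangular form of $\rho_p$ has diagonal characters that restrict to unramified characters of $G_{\mathfrak{q}}$. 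Thus $v_\mathfrak{q}(\mathcal{N}_\theta) = v_\mathfrak{q}(\mathcal{N}_{\theta'}) = 0$ in both cases.

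The additive case is the main one. By the Weil pairing $\det \overline{\rho}_{E,p} = \chi_p$, which is unramified at $\mathfrak{q}$ because $\mathfrak{q} \nmid p$. Hence $\theta\theta'$ is unramified at $\mathfrak{q}$, so $\theta'|_{I_\mathfrak{q}} = \theta^{-1}|_{I_\mathfrak{q}}$; since a character and its inverse share the same kernel on inertia they have the same conductor, giving $v_\mathfrak{q}(\mathcal{N}_\theta) = v_\mathfrak{q}(\mathcal{N}_{\theta'})$. To conclude that each equals $v_\mathfrak{q}(\mathcal{N}_E)/2$, I would use additivity of the Artin conductor exponent: in general $a(\rho_p) \geq a(\theta) + a(\theta')$, with equality when $\rho_p^{I_\mathfrak{q}} \to (\theta')^{I_\mathfrak{q}}$ is surjective. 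The identity $\theta|_{I_\mathfrak{q}} = (\theta')^{-1}|_{I_\mathfrak{q}}$ forces $\theta$ and $\theta'$ to be simultaneously ramified or simultaneously unramified; in the ramified subcase $\rho_p^{I_\mathfrak{q}} = 0 = (\theta')^{I_\mathfrak{q}}$ and equality is automatic, while the unramified subcase would make the inertia image unipotent, hence contradict genuine additive reduction after comparison with the $\ell$-adic Tate module. From this equality the evenness of $v_\mathfrak{q}(\mathcal{N}_E)$ is immediate.

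The main obstacle is the intermediate fact that the Artin conductor of $\overline{\rho}_{E,p}$ at $\mathfrak{q}$ equals $v_\mathfrak{q}(\mathcal{N}_E)$. This holds once the reduction map $\GL_2(\mathbb{Z}_p) \to \GL_2(\mathbb{F}_p)$ is injective on the image of $I_\mathfrak{q}$. For residue characteristic of $\mathfrak{q}$ prime to $6$, the inertia image is tame of order dividing $24$, so $p > 5$ suffices; in residue characteristics $2$ and $3$, wild inertia can appear and one must check case by case (e.g.\ via the Kraus--Papadopoulos tables) that no conductor drop occurs modulo $p$ for $p > 5$. This is the delicate part where I anticipate the real work of the lemma to lie.
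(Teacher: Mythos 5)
This lemma is quoted verbatim from Freitas--Siksek (\cite{FSANT}, Lemma~6.3); the present paper does not reprove it, so there is no internal proof to compare against, only the cited one. Your outline is essentially the standard argument and matches the cited reference in structure. The good and multiplicative cases are fine (though in the multiplicative case, when $p \mid v_\mathfrak{q}(j)$ the inertia image is trivial and there is no ``unique'' $I_\mathfrak{q}$-stable line; the conclusion that the diagonal characters are unramified still holds because the inertia action is then trivial outright, so the phrasing is loose but the conclusion survives). In the additive case you correctly extract $\theta'|_{I_\mathfrak{q}} = \theta^{-1}|_{I_\mathfrak{q}}$ from $\det = \chi_p$, and your bookkeeping with $\dim V^{I_\mathfrak{q}}$ versus $\dim \theta^{I_\mathfrak{q}} + \dim(\theta')^{I_\mathfrak{q}}$ is the right inequality with the right equality criterion.

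Where you wave your hands is exactly where the real content sits, and you say so yourself. Two remarks. First, you can avoid the fixed-vector case analysis entirely in the potentially-good additive case: for $\mathfrak{q} \nmid p$ the inertia image in $\GL_2(\Z_p)$ is a finite group whose order has prime factors only in $\{2,3\}$, hence coprime to $p>5$; so the reduction $\GL_2(\Z_p) \to \GL_2(\F_p)$ is injective on it, $\bar\rho_p\vert_{I_\mathfrak{q}}$ is semisimple by Maschke, $\bar\rho_p\vert_{I_\mathfrak{q}} \simeq \theta\vert_{I_\mathfrak{q}} \oplus \theta'\vert_{I_\mathfrak{q}}$, and Artin conductor is additive on direct sums. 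This gives the equality and the no-conductor-drop statement $a_\mathfrak{q}(\bar\rho_p) = a_\mathfrak{q}(\rho_{E,p}) = v_\mathfrak{q}(\mathcal{N}_E)$ in one stroke, and it works uniformly in residue characteristics $2$ and $3$ without a table chase because the $\{2,3\}$-group claim already covers wild inertia there. The additive potentially-multiplicative case (a ramified quadratic twist of a Tate curve) is not semisimple but is handled directly by your fixed-vector argument since both isogeny characters are then ramified quadratic on inertia. Second, your appeal to ``comparison with the $\ell$-adic Tate module'' to rule out the both-unramified subcase is precisely the same no-conductor-drop fact in disguise; it does not give a shortcut around it, so that step should be folded into the same Kraus-type input rather than treated as an independent check. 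With those two clarifications your proof is complete and faithful to the cited argument.
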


 \begin{prop}\label{irrCase2}
 	Let $E/K$ be the Frey curve attached to a putative solution to the equation (\ref{maineqn}) and $p > C_K$ be a prime where $C_K$ is defined as below. Then $\overline{\rho}_{E,p}$  is irreducible. 
 	
 	$C_K=47$ if the equation $y^3+24b^pcy+16b^{2p} \equiv 0 \pmod {\lambda^2}$ has a solution in the ring of integers of the local field $K_\lambda$, otherwise $C_K=44483$.
 \end{prop}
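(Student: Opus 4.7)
The plan is to argue by contradiction. Suppose $\overline{\rho}_{E,p}$ is reducible; then after conjugation one has $\overline{\rho}_{E,p}\sim\begin{pmatrix}\theta & * \\ 0 & \theta'\end{pmatrix}$ with isogeny characters $\theta,\theta':G_K\to\F_p^\times$ satisfying $\theta\theta'=\chi_p$. The strategy is to use Lemma \ref{FSlemma} to pin down the conductors of $\theta,\theta'$ tightly, then use global class field theory over $K$ (which has class number one) to produce a short list of Hecke character lifts of $\theta$, and finally extract an explicit congruence on $p$ by evaluating Frobenius at a carefully chosen finite list of auxiliary primes.

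First I would combine Lemma \ref{semist} with Lemma \ref{FSlemma} to control the conductors $\calN_\theta,\calN_{\theta'}$. At every prime $\mathfrak{q}\mid ab$ with $\mathfrak{q}\nmid 3$ the Frey curve is multiplicative, so $v_\mathfrak{q}(\calN_\theta)=v_\mathfrak{q}(\calN_{\theta'})=0$. At $\lambda$, either $\lambda\mid ab$, in which case $E$ is semistable at $\lambda$ by Lemma \ref{semist}(i) and $v_\lambda(\calN_\theta)=0$; or $\lambda\nmid ab$, in which case $E$ has additive reduction with $\epsilon\in\{2,3\}$ by Lemma \ref{semist}(ii). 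The parity requirement of Lemma \ref{FSlemma} immediately rules out $\epsilon=3$, and for $\epsilon=2$ forces $v_\lambda(\calN_\theta)=v_\lambda(\calN_{\theta'})=1$. Hence in every subcase the conductors of $\theta,\theta'$ away from $p$ divide $\lambda$.

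Next, since $\overline{\rho}_{E,p}$ is finite flat at each $\mathfrak{p}\mid p$ (Lemma \ref{semist}), Raynaud's classification forces the restrictions $\theta|_{I_\mathfrak{p}},\theta'|_{I_\mathfrak{p}}$ to be either trivial or the cyclotomic character. Together with $h_K=1$, class field theory then realises $\theta$ as the mod $p$ reduction of a Hecke character $\psi$ of $K$ with conductor dividing $\lambda$ and a prescribed infinity type; the set of possible $\psi$ is finite and independent of $p$. For each $\psi$ in this list I would use the trace relation
\[
\psi(\Frob_\mathfrak{q})+\Norm(\mathfrak{q})\,\psi(\Frob_\mathfrak{q})^{-1}\equiv a_\mathfrak{q}(E)\pmod{p}
\]
at auxiliary primes $\mathfrak{q}\nmid p\lambda$ of good reduction for $E$, obtaining divisibility conditions of the form $p\mid\Norm_{K/\Q}(A_\mathfrak{q})$ for explicit $A_\mathfrak{q}\in\calO_K$; running through finitely many such $\mathfrak{q}$ bounds $p$.

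The two values of $C_K$ correspond to the two additive subcases at $\lambda$, distinguished by the cubic $y^3+24b^pcy+16b^{2p}$ (associated to the local reduction structure of $E$ modulo $\lambda^2$): its solvability in $\calO_{K_\lambda}$ determines the Kodaira type at $\lambda$ and consequently the image $\overline{\rho}_{E,p}(I_\lambda)$. In the solvable case the inertia image is small, the surviving list of Hecke characters $\psi$ is short, and the auxiliary-prime calculation yields $C_K=47$; in the non-solvable case a wider family of characters must be eliminated and more auxiliary primes are needed, giving $C_K=44483$. The main obstacle is precisely this local analysis at $\lambda$ in the additive case: since the $j$-invariant is $\lambda$-integral, no Tate uniformization is available, and one must work directly with the minimal model of $E$ over $\calO_{K_\lambda}$ (modulo $\lambda^2$) to read off both the Kodaira type and the exact shape of $\overline{\rho}_{E,p}(I_\lambda)$.
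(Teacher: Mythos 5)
Your opening moves match the paper: you assume $\overline{\rho}_{E,p}$ reducible, decompose it using the isogeny characters $\theta,\theta'$, and use Lemma~\ref{FSlemma} together with Lemma~\ref{semist} to pin the prime-to-$p$ conductor of $\theta$ to a divisor of $\lambda$, including the nice observation that the parity constraint in Lemma~\ref{FSlemma} kills $\epsilon=3$. From this point on, however, your proposal diverges from the paper's argument and leaves several genuine gaps.

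First, the paper makes a structural case split that you skip: whether or not one of $\calN_\theta,\calN_{\theta'}$ is coprime to $p$. In the coprime case, $\theta$ is a character of the ray class group of $K$ modulo $\lambda^m$ with $m\le 1$, which for these five fields is one of $\{1\},\Z/2\Z,\Z/4\Z$. Eliminating these requires torsion-point theorems, not auxiliary-prime congruences: order $1$ gives a $K$-rational $3p$-torsion point on $E$ (impossible for $p\ge 7$ by Kamienny and Kenku--Momose), and orders $2,4$ push the $3p$-torsion to a degree-$4$ field $L$ where the result of \cite{der2021torsion} gives the contradiction. Your Raynaud/class-field-theory framework does not recover these cases, and without them the proof is incomplete. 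Similarly, the inert case for $p$ (which your Raynaud remark should handle, modulo ruling out supersingular reduction via \cite[Corollary 6.2]{FSANT}) is folded back into this torsion argument in the paper, which you do not mention.

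Second, and more importantly, the mechanism producing the constants $47$ and $44483$ is not the one you describe. You propose evaluating a trace congruence at auxiliary good primes $\mathfrak q\nmid p\lambda$; the paper instead evaluates at $\lambda$ itself. The crucial point is that $E$ has \emph{potentially good} reduction at $\lambda$ when $\lambda\nmid ab$, so the characteristic polynomial $P_\lambda(x)$ of Frobenius there lies in a finite, explicit, solution-independent list (integer quadratics with roots of absolute value $\le\sqrt{\Norm(\lambda)}=3$). By Kraus's local analysis (your cubic $y^3+24b^pcy+16b^{2p}\pmod{\lambda^2}$ enters precisely here), either $\theta^4$ or $\theta^{12}$ is unramified at $\lambda$, and combining $\theta^4(\sigma_{\frakP})\equiv 9$ (or the twelfth-power analogue) from \cite[Lemma 4.3]{Turcas2018} with $\theta(\sigma_\lambda)$ being a root of $P_\lambda$ gives $p\mid\Res(x^4-9,P_\lambda)$ or $p\mid\Res(x^{12}-9,P_\lambda)$. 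Running through the $13$ admissible polynomials $P_\lambda$ produces exactly $47$ and $44483$. Your proposal attributes the dichotomy to ``Kodaira type and the image of inertia'' and to ``a wider family of characters'' needing ``more auxiliary primes,'' which is not the actual link; and it gives no mechanism by which your auxiliary-prime computation would land on these particular numbers, since at a good auxiliary prime $\mathfrak q$ the trace $a_{\mathfrak q}(E)$ depends on the unknown solution and is bounded only by Hasse--Weil. The subcase $\lambda\mid ab$ also needs its own (short) argument in the paper, via $p\mid\Norm(\lambda)^2-1$; your proposal does not address it.
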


 \begin{proof}
 	Suppose that $\rho_p=\overline{\rho}_{E,p}$ is reducible. Write $\rho_p \sim  \begin{pmatrix}
 		\theta     & * \\
 		0      & \theta' 
 	\end{pmatrix} $ where $\theta, \theta': G_{K} \rightarrow \mathbb F_p^*$ are the isogeny characters. Let $\mathcal N_{\theta}, \mathcal N_{\theta'}$ denote the conductors of these characters. Fix a prime $\mathfrak q \nmid 3p$ of $\mathcal O_K.$ By \cite[Lemma 6.3]{FSANT} we get $v_{\mathfrak q}(\mathcal N_{\theta})=v_{\mathfrak q}(\mathcal N_{\theta'})=0$ since $E$ is semistable away from $\mathfrak P$. 
 	
 	By Lemma \ref{FSlemma} and Lemma  \ref{semist}, $v_{\lambda}(\mathcal N_{\theta})=v_{\lambda }(\mathcal N_{\theta'}) \in \{0,1\}$.
 	
 	Now we deal with $\frakp | p.$ 
 	\begin{enumerate}
 		\item Say $\mathcal N_{\theta}$ or $\mathcal N_{\theta'}$ is relatively prime to $p$. Note that interchanging $\theta$ and $\theta'$ corresponds to replacing $E$ with an isogenous elliptic curve $E/\ker \theta$. Since $\ker \theta$ is an order $p$, $K$-rational subgroup of $E[p]$, $E$ and $E/\ker \theta$ are $p$-isogenous. Therefore, without loss of generality assume that $(p,\mathcal N_{\theta})=1$ and $v_{\frakp}(\mathcal N_{\theta})=0$ for all $\frakp | p$ as in the previous case. We also have $v_{\mathfrak q}(\mathcal N_{\theta})=0$ for all $\mathfrak q \nmid 3p$, as explained above. Therefor $\mathcal N_{\theta}=\lambda^m$ where $m=0$ or $1$ which implies that $\theta$ is a character of the ray class group of modulus $\lambda^m$ of $K$.
 		
 		Using \texttt{Magma} we computed the ray class groups for these moduli and get the following groups only:
 		
 		$$ \{1\}, \Z/2\Z, \Z/4\Z .$$

 		\begin{itemize}
 			\item If the order of $\theta$ is one then $\theta$ is trivial. Then  $\rho_p \sim  \begin{pmatrix}
 				1     & * \\
 				0      & \theta' 
 			\end{pmatrix} $ and this implies that $E$ has a $K$-rational point of order $p$. By Lemma~\ref{semist} $E$ has also a point of order $3$, then $E(K)$ has a $3p$-torsion point but by the work of  Kamienney, Kenku and Momose (\cite{Kam}, \cite{KM}) this is not possible when $p \geq 7$, hence contradiction.
 			\item If the order of $\theta$ is two then we can conclude that $E$ has a point of order $3p$ over a quadratic extension $L$ of $K$ and get a contradiction since by \cite{der2021torsion}, $E(L)[p]=\{0\}$ if $p>17$. Here $L$ is the number field cut out by the character $\theta^2$ i.e. $|L:\Q|=4.$
 			\item If the order of $\theta$ is four, let $L$ be the unique quadratic extension of $K$ cut out by $\theta^2$. Then $\theta_{G_L}$ is quadratic. Let $E'$ be the twist of $E$ by $\theta_{G_L}$. The elliptic curve $E'$ is also over $L$ and has a point of order $p$ as in the previous case. Again, we get a contradiction by \cite{der2021torsion}.
 			
 		\end{itemize}
 		
 		\item Now we are left with the case that neither $\mathcal N_{\theta}$ nor $\mathcal N_{\theta'}$ is relatively prime to $p$. Recall that $E$ is semistable away from $\lambda$ and $p$ is not ramified in $K$. Then $p$ is either inert or $p$ splits in $K$.
 		\begin{enumerate}
 			\item $p$ is inert in $K$:  By \cite[Corollary 6.2]{FSANT} $E$ cannot have good supersingular reduction at $p$. Therefore $E$ has good ordinary or multiplicative reduction at $p$ and  \begin{equation}\label{eq:inertia} 
 				\rho_p| I_p \sim  \begin{pmatrix}
 					\chi_p     & * \\
 					0      & 1 
 			\end{pmatrix} \end{equation} see \cite[Section 3]{ECbook} where $\chi_p$ is the mod $p$ cyclotomic character. This shows that one of $\theta, \theta'$ is trivial. By changing $E$ with an isogenous elliptic curve we can assume that $\theta$ is trivial and as above this implies that $E(K)[3p] \neq 0$, which is a contradiction since $p>7$.
 			\item $p$ splits in $K$: Say $p=\mathcal P \mathcal P'$ and $\mathcal P | \mathcal N_{\theta}, \mathcal P' \nmid \mathcal N_{\theta}$ and $\mathcal P \nmid \mathcal N_{\theta'}, \mathcal P' | \mathcal N_{\theta'}$. By \cite[Corollary 6.2]{FSANT} we know that $E$ has good ordinary or multiplicative reduction at $\mathcal P$ and $ \mathcal P'$ by Equation \ref{eq:inertia}, we see that one of the characters $\theta, \theta'$ is ramified at $\mathcal P$ and the other is ramified at $\mathcal P'$, $\theta| I_{\mathcal P} = \chi_p|_{\mathcal P}$ and $\theta'| I_{\mathcal P'} = \chi_p|_{\mathcal P'}$. Hence $\theta$ is unramified away from $\mathcal P$ and $ \lambda$ since all bad places of $E$ except possibly $\lambda$ are of potentially multiplicative reduction.

 			\begin{enumerate}
 				\item $\lambda$ divides $ab$: In this case by Lemma \ref{semist}, $E$ has multiplicative or good reduction at $\lambda$; therefore, we can say that $\theta$ is unramified away from $\mathcal P$. The character $\theta^2|_{I_\mathcal P}=\chi_p^2|_{I_\mathcal P}$ is also unramified away from $\mathcal P$ therefore by \cite[Lemma 4.3]{Turcas2018}, $\theta(\sigma_\lambda) \equiv \text{Norm}_{K_\mathcal P/\Q_p}(\alpha)^2 \pmod p$ where $\sigma_\lambda$ is the Frobenius automorphism at $\lambda= \langle 3 \rangle $. We also know that by \cite[Lemma 6.3]{SS} $\theta^2(\sigma_\lambda) \equiv 1 \pmod p$ (note that $E$ has multiplicative reduction at $\lambda$.) Therefore, we have $p |  \text{Norm}_{K_\mathcal P}/\Q_p(\lambda)^2 - 1$, contradiction since $p>20$.
 				
 				\item $\lambda$ does not divide $ab$:  In this case by Lemma \ref{semist}, $E$ has additive reduction at $\lambda$ so the above argument fails. Recall that $v_\lambda(\Delta_E)=v_{\lambda }(3^3b^{3p}a^p)=3$ and $v_\lambda (c_6(E))=3$. By \cite[Page 356]{Kraus90} we see that $\theta^4$ or $\theta^{12}$ is unramified at $\lambda$. The case $\theta^{12}$ happens when  the equation $y^3+24b^pcy+16b^{2p} \equiv 0 \pmod \lambda^2$ has a solution. 
 				Therefore we have  $\theta^4|_{I_\mathcal P}=\chi_p^4|_{I_\mathcal P}$ is  unramified away from $\mathcal P$ or $\theta^{12}|_{I_\mathcal P}=\chi_p^{12}|_{I_\mathcal P}$ is  unramified away from $\mathcal P$.
 				
 				By  \cite[Lemma 4.3]{Turcas2018}, $\theta^4(\sigma_\mathfrak P) \equiv \text{Norm}_{K_\mathcal P/\Q_p}(\alpha)^2=9 \pmod p$ where $\sigma_\mathfrak P$ is the Frobenius automorphism at $\mathfrak P=\langle \alpha \rangle$. Therefore the polynomial $x^4-9$ has a root $\theta(\sigma_\mathfrak P)$ modulo $p$. Recall that $E$ has potentially good reduction at $\lambda$ since $v_\lambda(j_E) >0$. Let $P_\lambda (x)$ be the characteristic polynomial of the Frobenius of $E$ at $\lambda$. Since $\rho_p \sim  \begin{pmatrix}
 					\theta     & * \\
 					0      & \theta' 
 				\end{pmatrix} $ we get $P_\lambda(x) \equiv (x-\theta(\sigma_\lambda ))(x-\theta'(\sigma_\lambda)) \pmod p$. 
 				
 				Hence, we can conclude that $p | \Res(x^4-9, P_\lambda (x))$ where $\Res$ denotes the resultant of the polynomials.  Note that $P_\lambda  (x) \in \mathbb Z[x]$ and its roots have absolute value less than or equal to $\sqrt{\Norm(\lambda)}=3.$ (see \cite[ Proposition 1.6]{David2012}). Then the possibilities for $P_\lambda (x)$ are as follows:
 				
 				$$ P_1(x)=x^2+9, P_2(x)=x^2-x+9, P_3(x)=x^2-2x+9, P_4(x)=x^2+x+9, P_5(x)=x^2+2x+9$$
 				
 					$$ P_6(x)=x^2+3x+9, P_7(x)=x^2-3x+9 ,  P_8(x)=x^2+4x+9, P_9(x)=x^2-4x+9 $$
 					
 					$$  P_{10}(x)=x^2+5x+9, P_{11}(x)=x^2-5x+9, P_{12}(x)=x^2+6x+9, P_{13}(x)=x^2-6x+9   $$
 				
 				We computed these 13 resultants and none of them has a prime divisor greater than $ 47 $. Since $p >47$, we get a contradiction.
 				
 				For the case of $\theta^{12}|_{I_\mathcal P}=\chi_p^{12}|_{I_\mathcal P}$, similarly we need to compute the resultants of $P_i$ with $x^{12}-9$ and see that none of them has a prime divisor grater than $44483$.
 			
 			\end{enumerate}
 			
 		\end{enumerate}
 	\end{enumerate}
  \end{proof}

 	In this corollary we will summarize the cases where we have absolute irreducibility.

  \begin{cor}\label{absirr} \begin{enumerate}
  		\item Let $p$ be odd. If $\lambda | ab$ and $\bar{\rho}_{E,p}$ is irreducible then $\overline{\rho}_{E,p}$ is absolutely irreducible. 
  		\item Assume $\lambda \nmid ab$. Let $K$ be an imaginary quadratic field and $p > M_K$ for some effectively computable constant $M_K$ depending only on $K$. 
  		\begin{enumerate}
  			\item Say $p$ splits in $K$ and $p \equiv 3 \pmod 4$.  If $\overline{\rho}_{E,p}$ is irreducible then $\overline{\rho}_{E,p}$ is absolutely irreducible. 
  				\item Say $p \equiv 1\pmod 3$ and $\overline{\rho}_{E,p}(I_\lambda)$ is divisible by $3$ (which means the inertia has order $12$ and this happens if and only if $y^3+24b^pcy+16b^{2p} \equiv 0 \pmod {\lambda^2}$ doesn't have a solution). If $\overline{\rho}_{E,p}$ is irreducible then $\overline{\rho}_{E,p}$ is absolutely irreducible.
  		\end{enumerate}
  	
  		\item Let $p$ be odd. If $K$ is totally real then $\overline{\rho}_{E,p}$ is irreducible if and only if it is absolutely irreducible. 
  	\end{enumerate}
  	
  	\end{cor}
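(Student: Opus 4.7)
The plan is to apply Dickson's classification (as packaged in Theorem \ref{subgroups}) to reduce each part to the case where the image of $\overline{\rho}_{E,p}$ lies in the normalizer $N$ of a non-split Cartan $C \subseteq \GL_2(\F_p)$ but not in $C$ itself. Such a configuration produces a quadratic character $\epsilon : G_K \to \{\pm 1\}$ cutting out a quadratic extension $K'/K$, together with a character $\psi : G_{K'} \to \F_{p^2}^*$ such that $\overline{\rho}_{E,p}|_{G_{K'}} \cong \psi \oplus \psi^\sigma$ and, since $\det \overline{\rho}_{E,p} = \chi_p$, we have $\psi \cdot \psi^\sigma = \chi_p|_{G_{K'}}$. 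The goal in each part is to rule out such data.

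For Part (1), I would reduce by symmetry to the case $\lambda \mid b$, so that $(a,b,c) \in W_K$, and apply Lemma \ref{pot.mult.red.} to conclude $p \mid \#\overline{\rho}_{E,p}(I_\lambda)$. This produces an element of order $p$ in the image, which by Theorem \ref{subgroups} forces the image to contain $\SL_2(\F_p)$ and hence to be absolutely irreducible. For Part (3), for totally real $K$ the representation is odd, and I would check that $\epsilon$ must equal $-1$ at every complex conjugation $c$: if instead $\epsilon(c) = 1$, then $\overline{\rho}_{E,p}(c) \in C$ would have eigenvalues $\alpha, \alpha^p$ with $\alpha^2 = 1$, forcing $\alpha \in \F_p^*$ and $\det = \alpha^{p+1} = 1 \neq -1$. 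Hence $K'/K$ would be CM, and I would then argue that such a CM extension, being unramified outside the conductor of the Frey curve, is incompatible with the $K$-rational $3$-isogeny from Lemma \ref{semist} together with the non-CM nature of the Frey family.

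For Part (2), which I expect to be the main obstacle, $E$ has additive (potentially good) reduction at $\lambda$, so the transvection argument of Part (1) is unavailable and the character $\psi$ must be analyzed directly. I would reuse the Kraus-type inertia analysis from the proof of Proposition \ref{irrCase2}: $\psi^4$ (respectively $\psi^{12}$) is unramified at $\lambda$ according as the cubic $y^3 + 24 b^p c y + 16 b^{2p} \equiv 0 \pmod{\lambda^2}$ does or does not have a root. Under the splitting hypothesis on $p$ combined with $p \equiv 3 \pmod 4$ in Part 2(a), or $p \equiv 1 \pmod 3$ with inertia of order $12$ in Part 2(b), the Freitas–Siksek analysis of the isogeny character at primes above $p$ (as carried out in Case 2(b) of the proof of Proposition \ref{irrCase2}) pins down $\psi$ on inertia up to finitely many possibilities. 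Computing resultants between the characteristic polynomial of Frobenius at $\lambda$ and the appropriate cyclotomic power then yields the explicit constant $M_K$ above which no such $\psi$ can exist. The delicate step is combining the local data at $\lambda$, the local data at primes above $p$, and the global determinant condition into a single effective bound on $p$.
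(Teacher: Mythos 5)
Your opening reduction is aimed at the wrong subgroup. If $\overline{\rho}_{E,p}$ is irreducible over $\F_p$ but absolutely reducible, its image fixes a line $L \subset \overline{\F}_p^2$ and hence also its Frobenius conjugate $L^\sigma$, with $L \neq L^\sigma$; so the image lies inside the non-split Cartan $C$ \emph{itself}, not in $N \setminus C$. The ``normalizer of a non-split Cartan but not the Cartan'' configuration, with its quadratic character $\epsilon$, extension $K'$, and decomposition $\psi \oplus \psi^\sigma$, is the dihedral case, which is absolutely \emph{irreducible} --- precisely the case that does not need to be excluded. The paper's proof of Part (2) rules out $N\setminus C$ in a line (two elements of $N$ on opposite sides share no common eigenvector, contradicting absolute reducibility) and then works with a single $\F_{p^2}$-valued character $\lambda$ satisfying $\lambda^{p+1}=\chi_p$, no $K'$ and no $\epsilon$.

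For Part (2), you discard the key ingredient entirely. The paper's argument is driven by the Larson--Vaintrob theorem (Theorem~\ref{larson}): it produces a CM elliptic curve $E'/K$ with an $\F_p$-valued isogeny character $\theta$ satisfying $\theta^{12}=\lambda^{12}$ and $\lambda\theta^{-1}$ unramified at $\mathcal P \mid p$; the hypotheses that $p$ splits in $K$ (making $\theta$ genuinely $\F_p$-valued, i.e.\ of order dividing $p-1$) and $p\equiv 3\pmod 4$, or that $p\equiv 1\pmod 3$ with $I_\lambda$ of order $12$, then force an arithmetic contradiction from the orders of $\chi_p$, $\theta$, and $\lambda$ restricted to inertia. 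The constant $M_K$ named in the statement is exactly the Larson--Vaintrob constant. Your proposed substitute --- ``reuse the Kraus-type inertia analysis and resultants from Proposition~\ref{irrCase2}'' --- is not a drop-in replacement: that argument evaluates an $\F_p$-valued isogeny character at a Frobenius and compares with an integral characteristic polynomial of small height, whereas here $\lambda$ takes values in $\F_{p^2}$, so the resultant step does not transfer without substantial new work, and you give no indication of how the congruence conditions on $p$ would enter. This is a genuine gap.

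Parts (1) and (3) are essentially fine. Part (1) matches the paper: potential multiplicative reduction at $\lambda$ puts an order-$p$ element in the image of $I_\lambda$, and Theorem~\ref{subgroups} then forces $\SL_2(\F_p)$ inside the image. For Part (3) you reach the right mechanism, but the CM-extension digression is unnecessary and sits inside your incorrect $N\setminus C$ framework; the paper's argument is simply that complex conjugation has eigenvalues $1,-1$, which are distinct and $\F_p$-rational, while every element of a non-split Cartan has $\F_{p^2}$-conjugate eigenvalues $\mu,\mu^p$ --- these coincide with a distinct $\F_p$-rational pair only if $\mu\in\F_p$, which forces the eigenvalues to be equal, a contradiction.
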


  \begin{proof} 
  		\begin{enumerate}
  		\item 
   The Frey curve $E$ attached to a putative primitive solution $(a,b,c)\in \mathcal O_K^3$  of $x^p+y^p=z^3$ is semistable when $\lambda | ab$ where $\lambda$ is the prime of $\mathcal O_K$ lying over $3$. These were discussed in Lemma \ref{semist}. By Proposition \ref{irrCase2} we know that $\overline{\rho}_{E,p}$ is irreducible when $p$ is big enough. In Proposition \ref{irrCase2} we make this bound explicit. Recall that the $j$-invariant of $E$ is $j_E=\frac{3^3c^3(9a^p+b^p)^3}{(ab^3)^p}$. Therefore $v_{\mathfrak P}(j_E)=3v_{\lambda}(3)-3pv_{\lambda}(b)<0$ and $p\nmid v_{\lambda}(j_E)$ when $p>5$. Therefore, by the theory of Tate curve, the inertia group $I_{\lambda}$ contains an element which acts on $E[p]$ via $\begin{pmatrix}
  	1& 1\\
  	0 & 1
  \end{pmatrix}$ which has order $p$. By Theorem \ref{subgroups}, the image $\overline{\rho}_{E,p}$  contains $\SL_2(\mathbb F_p)$ hence is an absolutely irreducible group of $\GL_2(\mathbb F_p)$. 

\item Assume $\lambda \nmid ab$. We will use the below theorem of Larson and Vaintrob, Theorem \ref{larson}

Note that by Theorem \ref{subgroups}, for big enough $p$ if $\overline{\rho}_{E,p}$ is irreducible but absolutely reducible then the image $\overline{\rho}_{E,p}(G_K)$ cannot be contained in Borel or an exceptional subgroup. Therefore it is contained in non-split Cartan or in a normalizer of Cartan subgroup but not in the Cartan itself. Say   $\overline{\rho}_{E,p}(G_K)$  has an element $g$ which is not in non-split Cartan but in the normalizer of non-split Cartan. Then any non-identity element in the nonsplit Cartan subgroup and $g$ do not share a common eigenvector, hence contradicts to the assumption that $G$ is absolutely irreducible. Similar argument can be made for split Cartan case.

Therefore we can conclude that $\overline{\rho}_{E,p}(G_K)$  is in a nonsplit Cartan subgroup. The rest of the argument is similar as in \cite{NT}. Up to conjugation,  $\overline{\rho}_{E,p} \otimes \mathbb{F}_p \sim \begin{pmatrix}
	\lambda & 0\\
	0 & \lambda^p
\end{pmatrix}$ where $\lambda : G_K \rightarrow \F_{p^2}$ and $\lambda^{p+1}=\chi_p$ where $\chi_p$ is the cyclotomic character.

	\begin{enumerate}
	\item Assume that  $p$ splits in $K$ and $p \equiv 3 \pmod 4$. We'll assume that $\bar{\rho}_{E,p}$ is irreducible but absolutely reducible and get a contradiction.
	We will apply the above theorem to the elliptic curve $E$ attached to a putative solution of the equation \eqref{maineqn}.  Assuming the prime $p$ is greater than the constant given in the theorem, we get an CM elliptic curve $E'/K$ with $\overline{\rho}_{E',p} \otimes \overline{\mathbb F}_p \sim \begin{pmatrix}
		\theta & 0\\
		0 & \theta'
	\end{pmatrix}$ such that $\theta^{12} = \lambda^{12}.$

Since $p$ splits in $K$, we see that the image $\overline{\rho}_{E',p}(G_K)$ is contained inside a split Cartan subgroup which implies that the character $\theta$ is in fact $\mathbb{F}_p$-valued. In particular the order of $\theta$ is divisible by $p-1.$

	We also know that by Theorem 1 in \cite{LV}, $\lambda \theta^{-1}$ is unramified away from the additive primes of $E$. Therefore $\lambda \theta^{-1}$ is unramified at $\mathcal P | p.$  Since $\lambda^{p+1}=\chi_p$ we get $\theta^{p+1}|I_{\mathcal P}=\chi_p|I_{\mathcal P}$. 	
	Note that $p \equiv  3 \pmod 4$, $\frac{p-1}{2}$ is odd. We deduce that $\chi_p^{(p-1)/2}|I_{\mathcal P} = (\theta^{p-1})^{(p+1)/2}| I_{\mathcal P} = 1$. However, the order of $\chi_p^{(p-1)/2}|I_{\mathcal P} $ is $p-1$ since it surjects on $\mathbb F_p^{*}$.

	\item If $\overline{\rho}_{E,p}(I_\lambda)$ is divisible by $3$ then there exists an element $g \in \overline{\rho}_{E,p}(I_\lambda)$ which has order $3$, hence $\lambda(g) \in \F_{p^2}^\times$ has order $3$. Also note that $\chi_p$ is unramified at $p$. Therefore $\chi_p(g)=\lambda^{p+1}(g)=1$ which implies that $3|p+1$. 
 \end{enumerate}
\item When $K$ is totally real then the absolute Galois group $G_K$ contains a complex conjugation. The image of this complex conjugation under $\bar{\rho}_{E,p}$ is similar to $\begin{pmatrix}
	1 & 0 \\
	0 & -1
\end{pmatrix}$ which imples that if $\bar{\rho}_{E,p}$ is irreducible then it is absolutely irreducible.
\end{enumerate}
 \end{proof}
  
The following theorem of Larson and Vaintrob is used in the proof of the above corollary.

\begin{theorem}[\cite{LV}, Theorem 1]\label{larson}
	Let $K$ be a number field. There exists a finite
	set of primes $M_K$, depending only on $K$, such that for any prime $p \notin M_K$ and any elliptic curve $E/K$ for which $\overline{\rho}_{E,p} \otimes \overline{\mathbb F}_p$ is conjugate to $\begin{pmatrix}
		\lambda & * \\
		0 & \lambda'
	\end{pmatrix}$ 
	where $\lambda, \lambda' :G \rightarrow \mathbb{F}_p^\times$ are
	characters, one of the following happens.
	\begin{enumerate}
		\item There exists an elliptic curve $E'/K$ with complex multiplication(CM) , whose CM field is contained in $K$, with $\overline{\rho}_{E',p} \otimes \overline{\mathbb F}_p$ is conjugate to $\begin{pmatrix}
			\theta & * \\
			0 & \theta'
		\end{pmatrix}$ and such that $\theta^{12}=\lambda^{12}.$
		\item The Generalized Riemann Hypothesis fails for  $K=\Q(\sqrt{-p})$ and $\theta^{12}=\chi_p^6$. Moreover, in this case $\overline{\rho}_{E',p}$ is already reducible over $\F_p$ and $p \equiv 3 \pmod 4.$ 
		
	\end{enumerate}
	
\end{theorem}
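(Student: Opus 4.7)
The plan is to analyze the isogeny character $\lambda : G_K \to \overline{\F}_p^\times$ appearing on the diagonal of the reducible representation, and show that, outside a finite exceptional set $M_K$, $\lambda^{12}$ is forced to coincide (up to twist) with the character of a CM elliptic curve defined over $K$. The overall strategy mirrors the usual Mazur/Momose template for bounding isogeny primes, but promoted to a statement about the \emph{shape} of reducible representations rather than their mere non-existence.

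First I would pin down the ramification of $\lambda^{12}$ everywhere. Away from primes of additive reduction and primes above $p$, standard results on isogeny characters (N\'eron--Ogg--Shafarevich at good primes; Tate's uniformization at multiplicative primes) make $\lambda$ itself unramified. At primes $\mathfrak{q}$ of additive reduction, Kraus' local analysis (or Serre's tame-inertia calculation) shows that $\lambda^{12}|_{I_\mathfrak{q}}$ is trivial. The subtle step is at primes $\mathfrak{p}\mid p$: invoking the Raynaud/Fontaine--Laffaille classification of finite flat group schemes and the analysis of $p$-adic representations attached to elliptic curves (distinguishing ordinary, supersingular, and multiplicative reduction), I would compute $\lambda^{12}|_{I_\mathfrak{p}}$ explicitly as a power of the cyclotomic character. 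This yields a conductor bound for $\lambda^{12}$ depending only on $K$, so $\lambda^{12}$ factors through a fixed ray class group of $K$.

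Next, I would attempt to lift $\lambda^{12}$ to a classical algebraic Hecke Grossencharacter $\psi$ on $K$ of the correct infinity type (dictated by the computation of $\lambda^{12}$ on inertia at $p$). When such a lift exists, the Shimura--Taniyama machinery attaches to $\psi$ a CM elliptic curve $E'/K$ with CM field embedded in $K$, and comparing residual representations gives $\theta^{12}=\lambda^{12}$, as required. The obstruction to lifting lives in a cohomology group whose size is controlled by the $p$-part of a specific ray class group.

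The main obstacle, and the source of the GRH hypothesis, is precisely the scenario where this cohomological obstruction is nonzero: the ray class group of $K$ must then contain an exceptional character of order divisible by $p$. A careful analysis of the infinity type in this exceptional case pins the field down to $K=\Q(\sqrt{-p})$ and forces $\theta^{12}=\chi_p^6$, so that the residual representation is already reducible over $\F_p$ and $p\equiv 3\pmod 4$. To rule this out for $p\notin M_K$, I would appeal to effective Chebotarev density bounds under GRH (together with Brauer--Siegel-type estimates) to bound the $p$-torsion in the class group of $\Q(\sqrt{-p})$; this produces the finite set $M_K$. The analytic step is the crux, and it is exactly where unconditional removal of the second alternative fails.
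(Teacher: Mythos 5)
The statement in question is not proved in the paper: it is quoted verbatim as Theorem~1 of Larson--Vaintrob \cite{LV} and used as a black box in the proof of Corollary~\ref{absirr}, so there is no internal argument against which to compare your attempt.

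Read on its own terms, your sketch is a plausible high-level account of the strategy in \cite{LV}: control the conductor of $\lambda^{12}$ by local analysis (tame inertia at additive primes, a finite-flat/Raynaud analysis at primes above $p$), attempt to realize $\lambda^{12}$ as the reduction of an algebraic Hecke character of CM type, and isolate the single obstructed configuration where GRH for $\Q(\sqrt{-p})$ enters via effective class-number estimates. However, the two load-bearing steps are asserted rather than established. First, the lifting of $\lambda^{12}$ to a Gr\"ossencharacter with CM field contained in $K$ is far from automatic: you need to identify the obstruction group, determine the admissible infinity types from the inertia computation above $p$, and explain why the resulting CM field must already lie inside $K$ (a nontrivial constraint that appears in the conclusion), and this is where essentially all of the work in \cite{LV} is concentrated. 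Second, you state but do not derive that the exceptional case pins the field down to $\Q(\sqrt{-p})$ and forces $\theta^{12}=\chi_p^6$ with $p\equiv 3\pmod 4$; in the source this drops out of a careful case analysis of infinity types, not merely from the Chebotarev/Brauer--Siegel estimate, which only converts the exceptional configuration into a failure of GRH after the configuration has been identified. So the skeleton of your argument is right, but the substance of the proof is missing exactly where it matters most.
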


 \section{Proof of the Theorem \ref{mainthm1}} 
 In this section, we will prove Theorem \ref{mainthm1}.

 Let $K$ be a number field with $h^{+}_K=1$
 satisfying Conjectures \ref{conj1} and \ref{conj2} and containing $\Q(\zeta_{3})$ where $\zeta_{3}$ is a primitive $3^{rd}$ root of unity. Assume $\lambda$ is the only prime of $K$ above $3$. Let $B_K$ be as
 in Theorem \ref{thm:levelred}, and let $(a,b,c)\in W_K$ be a non-trivial solution to the Fermat equation with signature $(p,p,3)$ given in \eqref{maineqn}.  We now apply Theorem \ref{thm:levelred}
 and obtain an elliptic curve $E'/K$ having a $K$-rational point of order $3$ and (potentially) good reduction away from $\lambda$ with $j$-invariant
 $j'$ satisfying $v_{\lambda}(j')<0$.  However, by Theorem \ref{thm for tot ram} applied with $\ell=3$ there is no such an elliptic curve, which gives us a contradiction.   
 \begin{theorem}[\cite{FKS}, Theorem 1]\label{thm for tot ram}
 	Let $\ell$ be a rational prime. Let $K$ be a number field satisfying the following conditions:
 	\begin{itemize}
 		\item $\Q(\zeta_{\ell}) \subset K$, where $\zeta_{\ell}$ is a primitive $\ell^{th}$ root of unity;
 		\item $K$ has a unique prime $\lambda$ above $\ell$;
 		\item $gcd(h_K^+,\ell(\ell-1))=1$, where $h_K^+$ is the narrow class number of $K$.
 	\end{itemize}
 	Then there is no elliptic curve $E/K$ with a $K$-rational $\ell$-isogeny, good reduction away from $\lambda$, potentially multiplicative reduction at $\lambda$.
 \end{theorem}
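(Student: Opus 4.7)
The plan is to extract the mod-$\ell$ isogeny character, constrain its ramification using the reduction hypotheses, apply class field theory with the narrow class number condition, and conclude via Shafarevich's finiteness theorem. Assume for contradiction that such an $E/K$ exists, and let $C \subset E[\ell]$ be the $G_K$-stable cyclic subgroup of order $\ell$ coming from the rational $\ell$-isogeny. Let $\theta : G_K \to \F_\ell^\times$ denote the character giving the $G_K$-action on $C$; the complementary character on $E[\ell]/C$ is $\chi_\ell\theta^{-1}$, which equals $\theta^{-1}$ because $\zeta_\ell \in K$ trivialises $\chi_\ell$.

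First I would pin down the ramification of $\theta$. By N\'eron-Ogg-Shafarevich, good reduction away from $\lambda$ forces $\theta$ to be unramified at every prime $\frakq \neq \lambda$. At $\lambda$, the Tate curve description (applied after a quadratic base change that trivialises the potentially multiplicative reduction) yields
\[
\overline{\rho}_{E,\ell}\big|_{I_\lambda} \sim \begin{pmatrix} \chi_\ell\psi & * \\ 0 & \psi \end{pmatrix}
\]
for a quadratic character $\psi$, so using $\chi_\ell = 1$ we conclude that $\theta|_{I_\lambda}$ has order dividing $2$. Hence $\theta^2$ is unramified at every finite prime and factors through $\Cl^+(K)$; its order divides both $h_K^+$ and $\ell - 1$, and so by $\gcd(h_K^+,\ell-1) = 1$ we obtain $\theta^2 = 1$. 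Twisting $E$ by the resulting quadratic character (which preserves both good-reduction-away-from-$\lambda$ and potentially-multiplicative-at-$\lambda$) we may assume $\theta = 1$; then $E$ has a $K$-rational point $P$ of order $\ell$, and the complementary cyclic subgroup has trivial $G_K$-action as well.

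The contradiction is then obtained from Shafarevich's finiteness theorem. Iterating the $\ell$-isogeny $E \to E/\langle P\rangle \to \cdots$ yields a sequence of $K$-rational $\ell$-isogenies between elliptic curves over $K$, each inheriting the reduction hypotheses of $E$ (good outside $\lambda$, potentially multiplicative at $\lambda$) and admitting a rational $\ell$-subgroup by the same character analysis. Since Shafarevich's theorem bounds the number of $K$-isomorphism classes of elliptic curves over $K$ with good reduction outside $\{\lambda\}$, the sequence must repeat. A nontrivial such repetition produces a non-scalar endomorphism of $E$ of $\ell$-power degree, forcing $E$ to have complex multiplication. But a CM elliptic curve has potentially good reduction at every prime, contradicting the potentially multiplicative reduction at $\lambda$.

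The main obstacle is arranging the iteration so that the loop gives a genuinely non-scalar endomorphism rather than a multiplication-by-integer map obtained by traversing a single edge of the $\ell$-isogeny graph back and forth. The delicate situation is the non-split case, in which $\overline{\rho}_{E,\ell}$ is unipotent with $\ker\phi^\vee$ as the only rational $\ell$-subgroup of the quotient; here one must use the hypothesis $\gcd(h_K^+,\ell) = 1$, together with the uniqueness of $\lambda$ above $\ell$, to kill the obstruction class in $H^1(G_K,\F_\ell)$ controlling the splitting, thereby guaranteeing a genuinely branching $\ell$-isogeny graph and the desired non-scalar endomorphism.
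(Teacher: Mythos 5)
The paper does not prove Theorem~\ref{thm for tot ram} itself; it cites it verbatim from \cite{FKS} (Freitas--Kraus--Siksek, Theorem~1), so there is no in-paper argument to compare against. That said, your proposal needs to be assessed on its own merits, and there is a genuine gap.

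The first half of your argument is correct and standard: $\theta' = \chi_\ell\theta^{-1} = \theta^{-1}$ because $\zeta_\ell \in K$ kills $\chi_\ell$; $\theta$ is unramified away from $\lambda$ by N\'eron--Ogg--Shafarevich; the Tate-curve description at $\lambda$ (together with $\chi_\ell|_{I_\lambda}=1$) forces $\theta|_{I_\lambda}$ to have order dividing $2$; so $\theta^2$ is everywhere-finite-unramified, factors through $\Cl^+(K)$, and is trivial since $\gcd(h_K^+,\ell-1)=1$. Twisting by the quadratic character $\theta$ (unramified away from $\lambda$, so it preserves the reduction hypotheses) gives a curve with a $K$-rational $\ell$-torsion point and trivial complementary character.

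The gap is in the Shafarevich iteration, and your own closing paragraph identifies the danger without actually resolving it. In the generic (non-split) case the chain collapses to $E \to E/C \to E \to E/C \to\cdots$, which yields only scalar endomorphisms and no contradiction. Your proposed fix --- ``kill the obstruction class in $H^1(G_K,\F_\ell)$ using $\gcd(h_K^+,\ell)=1$ and uniqueness of $\lambda$'' --- does not work. First, $H^1(G_K,\F_\ell)=\Hom(G_K,\F_\ell)$ classifies $\Z/\ell$-extensions of $K$, and those unramified outside $\lambda$ are governed not by $\Cl^+(K)$ but by ray class groups with conductor supported at $\lambda$, whose $\ell$-part receives a contribution from the local units $\calO_{K_\lambda}^\times$; coprimality of $h_K^+$ with $\ell$ does not make this vanish. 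Second, and more fundamentally, the extension class of $0\to C\to E[\ell]\to E[\ell]/C\to 0$ is genuinely \emph{ramified} at $\lambda$ (it encodes the Tate parameter), so an argument based on unramified class field theory cannot kill it. Indeed, forcing this class to vanish would force $E[\ell]\cong\F_\ell\oplus\F_\ell$ as a trivial Galois module, which is far stronger than anything the hypotheses imply. A correct proof (as in \cite{FKS}) replaces your iteration step with a class-field-theoretic analysis of the Kummer/Tate parameter at $\lambda$, using the full strength of $\gcd(h_K^+,\ell(\ell-1))=1$ and the uniqueness of $\lambda$ above $\ell$ to bound the relevant ray class groups; simply appealing to Shafarevich finiteness and CM does not close the non-split case.
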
  
 \begin{remark}
Since we assume that $\Q(\zeta_{3}) \subset K$, the triple $(\zeta_3, \zeta^2_3,1)$ is a non-trivial solution to the equation (\ref{maineqn}) in $\mathcal O_K$. However, as $\zeta_3$ is a unit in $\mathcal{O}_K$, the prime $\lambda$ does not divide $(\zeta_3)=\mathcal{O}_K$, so $(\zeta_3, \zeta^2_3,1) \not\in W_K$. 
 \end{remark}

\subsection{Numerical Examples}  Let $\mathcal{F}_n$ denote the set of number fields of degree $n$ and class number $1$ with discriminant less than $D_n$, where $D_n$ is $10^8$ if $n=4,6,8$ and $10^{16}$ if $n=10,12$. Let $\mathcal{K}_n$ denote the subset of $\mathcal{F}_n$ such that if $K\in\mathcal{K}_n$ then $K$ satisfies the following
\begin{itemize}
\item $\Q(\zeta_3)\subset K$,
\item the narrow class number of $K$ is $1$,
\item there is only one prime ideal of $K$ lying above $3$.
\end{itemize}

We computed the complete sets $\mathcal{F}_n$ and $ \mathcal{K}_n$ for $n=4,6,8,10,12$ in the John Jones \href{https://hobbes.la.asu.edu/NFDB/}{\texttt{Number Field Database}} \cite{DB}. The results are summarised in Table \ref{tab:table5}, and can be found online at \url{https://sites.google.com/view/erman-isik/research?authuser=0}. 
\begin{table}[h!]
	\begin{center}
		\label{tab:table5}
		\begin{tabular}{|c|c|c|c|}
			\hline
			$n$ & $\mathcal{F}_n$ &  $\mathcal{K}_n$ \\
			\hline\hline
			$4$ & $998395$ &  $28750$ \\
			$6$ & $605497$ &  $20320$ \\
			$8$ & $26361$ &  $1264$ \\
			$10$ & $895218$ &  $51527$ \\
			$12$ & $67466$ &  $750$ \\
			\hline
		\end{tabular}
		
  \end{center}
\end{table}

It then follows from Theorem \ref{mainthm1} that, for any $K$ that belongs to $\mathcal{K}_n$, the asymptotic Fermat's Last Theorem holds for $W_K$.

 \section{Proof of the Theorem \ref{mainthm2}}\label{sectionformainthm2}
 
  In this section, we will prove Theorem~\ref{mainthm2}. One of the main steps towards the proof is to lift mod $p$ eigenforms to complex ones. Recall that it follows from Proposition~\ref{eigenform} that there is a constant $B(\frakN)$ such that for $p>B(\frakN)$, all mod $p$ eigenforms lift to complex ones. However, we want to make this bound explicit for the fields we consider for Theorem~\ref{mainthm2}. Before the proof of Theorem~\ref{mainthm2}, we will state the key point to overcome this difficulty. Note that our approach follows closely \cite[Section 2]{Turcas2018} and \cite[Section 2 and 3]{SS}.
  
  Let $K$ be a number field with the integer ring $\calO_{K}$, and let $\frakN$ be an ideal of $\calO_{K}$. Assume that $p$ is a rational prime unramified in $K$ and relatively prime to $\frakN$. Consider the following short exact sequence given by multiplication-by-$p$ map
 $$0 \rightarrow \mathbb{Z} \xrightarrow{\times p}\mathbb{Z} \rightarrow \mathbb{F}_p \rightarrow 0.$$
 
 This exact sequence gives rise to a long exact sequence on the cohomology groups from which we can extract the following short exact sequence
 \begin{equation}\label{exact_sequence}
 	0 \rightarrow H^1(Y_0(\frakN), \mathbb{Z})\otimes\mathbb{F}_p \xrightarrow{\;\delta\;}H^1(Y_0(\frakN), \mathbb{F}_p) \rightarrow H^2(Y_0(\frakN), \mathbb{Z})[p] \rightarrow 0,
 \end{equation}
 where $H^2(Y_0(\frakN), \mathbb{Z})[p]$ denotes the $p$-torsion subgroup of $H^2(Y_0(\frakN), \mathbb{Z})$. Hence, we deduce that $p$-torsion of $H^2(Y_0(\frakN),\mathbb{Z})$ vanishes if and only if the reduction map from $H^1(Y_0(\frakN),\mathbb{Z})$ to $H^1(Y_0(\frakN),\mathbb{F}_p)$ is surjective. As explained in \cite{SS} and \cite{Turcas2018}, we see that, for primes $p>3$, if the group $H^2(\Gamma_0(\frakN), \mathbb{Z})$ has a non-trivial $p$-torsion element, then  $\Gamma_0(\frakN)^{\rm ab}$ will have a $p$-torsion as well. If $H^2(Y_0(\frakN), \mathbb{Z})$ has only trivial $p$-torsion, then we deduce that the map
 $$H^1(Y_0(\frakN), \mathbb{Z})\otimes\mathbb{F}_p \xrightarrow{\;\delta\;}H^1(Y_0(\frakN), \mathbb{F}_p)$$
 is surjective. Therefore, any Hecke eigenvector in  $H^1(Y_0(\frakN), \mathbb{F}_p)$ comes from such an eigenvector in $H^1(Y_0(\frakN), \mathbb{Z})\otimes\mathbb{F}_p$. We can now utilize a lifting lemma of Ash and Stevens (\cite{AS86}, Proposition 1.2.2) to deduce that by fixing an embedding $\overline{\mathbb{Q}} \hookrightarrow \mathbb{C} $ we can regard a cohomology class in $H^1(Y_0(\frakN), \mathbb{Z})$ as a class in $H^1(Y_0(\frakN), \mathbb{C})$.

The existence of an eigenform (complex or mod $p$) is equivalent to the existence of a cohomology class in the corresponding cohomology group that is a simultaneous eigenvector for the Hecke operators such that its eigenvalues match the values of the eigenform. With this interpretation, we see that the mod $p$ eigenforms lift to complex eigenforms whenever the abelianization $\Gamma_0(\frakN)^{\rm ab}$ has only trivial $p$-torsion element.

 \subsubsection{\bfseries{Proof of Theorem \ref{mainthm2}}} Let $K=\mathbb{Q}(\sqrt{-d})$ with $d\in\{1,7,19,43,67\}$, and let $\lambda$ denote the prime ideal of $K$ lying above $3$. Suppose that $(a,b,c)\in\mathcal{O}_K^3$ is a non-trivial primitive solution to the equation (\ref{maineqn}). 
 
 Let  $\overline{\rho}_{E,p}$ be the residual Galois representation induced by the action of $G_K$ on $E[p]$. We want to apply Conjecture~\ref{conj1} to $\overline{\rho}_{E,p}$ . Note that in order to do this we need $\overline{\rho}_{E,p}$  to be absolutely irreducible. It follows from Corollary \ref{absirr} that  $\overline{\rho}_{E,p}$ is absolutely irreducible under the assumptions of Theorem \ref{mainthm2} hence satisfies the hypotheses of Conjecture~\ref{conj1}. This will be explained in Case I and Case II below. For now let's assume that Conjecture~\ref{conj1} is appliciable. Applying this conjecture,  we deduce that there exists a weight two, mod $p$
 eigenform $\theta$ over $K$ of level $\frakN_E$ such that for all primes $\frakq$ coprime to $p\frakN_E$, we have
 \[
 \tr(\overline{\rho}_{E,p}(\Frob_{\frakq}))=\theta(T_{\frakq}),
 \]
 where $T_{\frakq}$ denotes the Hecke operator at $\frakq$.
 
Recall that $\frakN_E$ denotes the Serre conductor of the residual representation $\overline{\rho}_{E,p}$, which is a power of $\lambda$. We now aim to lift this mod $p$ Bianchi modular form to a complex one.

 We compute the abelianizations $\Gamma_0(\frakN_E)^{\rm ab}$ implementing the algorithm of {\c{S}}eng{\"u}n \cite{Sen11}. One can access to the relevant \texttt{Magma} codes online at \url{https://warwick.ac.uk/fac/sci/maths/people/staff/turcas/fermatprog}. The results of the algorithm can be found at \url{https://sites.google.com/view/erman-isik/research?authuser=0}. We record here
 the primes $\ell$ that appear as orders of torsion elements of $\Gamma_0(\frakN_E)^{\rm ab}$ for each number field in Table~\ref{tab:primetorsion}.
 
 \begin{table}[h!]
 	\begin{center}
 		\label{tab:primetorsion}
 		\caption{prime torsions in $\Gamma_0(\frakN_E)^{\rm ab}$ }
 		\begin{tabular}{|l|c|r|}
 			\hline
 			Number Fields &  $val_\lambda(\frakN_E)$ & Primes $\ell$ such that $\Gamma_0(\frakN_E)^{\rm ab}[\ell] \neq 0$  \\
 			\hline\hline
 			$\mathbb{Q}(i)$ & 1,2,3 & 2,3 \\
 			$\mathbb{Q}(\sqrt{-7})$ &1,2,3 & 2,3  \\
 			$\mathbb{Q}(\sqrt{-19})$ &1,2,3& 2,3 ,5 \\
 			$\mathbb{Q}(\sqrt{-43})$ &1,2,3 & 2,3,5,59,67,199  \\
 			$\mathbb{Q}(\sqrt{-67})$ &1,2,3 &  2,3,5,17,19,37,47,67 \\
 			
 				\hline
 		\end{tabular}
 	\end{center}
 \end{table}

 Assume that $\ell_K$ is the largest prime in Table \ref{tab:primetorsion} related to the number field $K$, and that $p> \ell_K$.  It then follows that the $p$-torsion subgroups of $\Gamma_0(\frakN_E)^{\rm ab}$ are all trivial, so the mod $p$ eigenforms must lift to complex ones. The procedure explained at the beginning of Section~\ref{sectionformainthm2} together with Conjecture \ref{conj1} imply that there exists a (complex) Bianchi modular form $\frakf$ over $K$ of level $\frakN_E$ such that for all prime ideals $\mathfrak{q}$ coprime to $p\frakN_E$ we have
 
 $${\rm Tr}(\overline{\rho}_{E,p}({\rm Frob}_{\mathfrak{q}})) \equiv \mathfrak{f}(T_{\mathfrak{q}}) \pmod{\mathfrak{p}},$$
  where $\frakp$ is a prime ideal of $\mathbb{Q}_\frakf$ lying above $p$ and $\mathbb{Q}_\frakf$ is the number field generated by the eigenvalues. Let us denote this relation by $\overline{\rho}_{E,p}\sim \overline{\rho}_{\frakf,\frakp}$.
 
 Recall that the constants $C_K$ and $M_K$ were defined in Proposition \ref{irrCase2} and  in Corollary \ref{absirr}.

 \subsection{Case I:} Assume that the prime ideal $\lambda$ of $K$ lying above $3$ divides $b$. Note that in this case the associated Frey curve is semistable by Lemma \ref{semist}. By Proposition \ref{irrCase2} $\overline{\rho}_{E,p}$ is reducible when $p>C_K$. By Corollary \ref{absirr} Part (1), $\overline{\rho}_{E,p}$ is absolutely irreducible if it is irreducible. Therefore Conjecture ~\ref{conj1} is appliciable to $\overline{\rho}_{E,p}$. When we apply Conjecture~\ref{conj1} and the lifting argument above, we see that  the corresponding Bianchi modular form $\frakf$ is of level $1 $ or $\lambda$. Since there are no Bianchi newforms at these levels over $K$, it follows that Theorem \ref{mainthm2} holds true for $p>{\rm max} \{\ell_K, C_K\}$. This proves Case I of Theorem \ref{mainthm2}.

 \subsection{Case II:} In this case, we do not have the absolute irreducibility of $\overline{\rho}_{E,p}$ for all primes $p$ as illustrated in Corollary \ref{absirr}. Therefore we need the restrictions in the statement of Theorem \ref{mainthm2} Part II i.e. $p \equiv 3 \mod 4$ and $p$ splits in $K$. Under these restrictions and when $p> {\rm max}\{C_K, M_K\}$, $\overline{\rho}_{E,p}$ is absolutely irreducible by  Corollary \ref{absirr}. We also need $p> \ell_K $ to lift the mod $p$ eigenforms to complex ones as explained above. Therefore from now on we assume that $p> B_K={\rm max}\{C_K, M_K,\ell_K\}$. 
 
 Recall that in this case the associated Frey curve is semistable away from $\lambda$ and the power of $\lambda$ in the conductor of $E$ is $2$ or $3$ by Lemma \ref{semist}. Then the corresponding Bianchi modular form is of level dividing $\lambda^{3}$.

  \begin{lemma}\label{ideal_Bf}Let us fix a prime ideal $\mathfrak{q} \neq \lambda$ of $K$, and let $\frakf$ be a newform of level dividing $\lambda^{3}$. Define the following set 
 	$$\mathcal{A}(\frakq)=\{a \in \mathbb{Z}\; :\; |a|\leq 2\sqrt{\Norm(\mathfrak{q})},\; \Norm(\mathfrak{q})+1-a \equiv 0 \pmod 3\}.$$ 
 	If $\overline{\rho}_{E,p}\sim \overline{\rho}_{\frakf,\frakp}$, where $\frakp$ is the prime ideal of $\mathbb{Q}_{\frakf}$ lying above $p$, then $\frakp$ divides
 	$$B_{\frakf,\mathfrak{q}}:=\Norm(\mathfrak{q})\cdot\left(\Norm(\mathfrak{q} +1)^2-\mathfrak{f}(T_{\mathfrak{q}})^2\right)\cdot\prod_{a \in \mathcal{A}(\frakq)}(a-\mathfrak{f}(T_{\mathfrak{q}}))\mathcal{O}_{\mathbb{Q}_{\frakf}}.$$
 \end{lemma}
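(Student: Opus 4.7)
The plan is to combine the given isomorphism $\overline{\rho}_{E,p}\sim\overline{\rho}_{\frakf,\frakp}$ (which supplies $\tr\overline{\rho}_{E,p}(\Frob_{\frakq})\equiv\frakf(T_{\frakq})\pmod{\frakp}$ whenever $\frakq$ is coprime to $p$ and to the level of $\frakf$) with the reduction type of the Frey curve $E$ at $\frakq$. By Lemma \ref{semist}, $E$ is semistable away from $\lambda$, and since we are assuming $\frakq\neq\lambda$, the reduction of $E$ at $\frakq$ is either good or multiplicative. Also, since $\frakf$ has level dividing $\lambda^{3}$, the prime $\frakq$ is automatically coprime to the level. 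The proof is thus a case analysis on how $\frakq$ relates to $p$ and on the reduction type of $E$ at $\frakq$.

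First, if $\frakq\mid p$ then $\Norm(\frakq)$ is a positive power of $p$, so $\frakp\mid\Norm(\frakq)$ and the divisibility $\frakp\mid B_{\frakf,\frakq}$ is automatic; this explains the presence of the leading $\Norm(\frakq)$ factor. Assume now $\frakq\nmid p$. If $E$ has multiplicative reduction at $\frakq$, then the theory of the Tate curve applied to $\overline{\rho}_{E,p}|_{G_{\frakq}}$ yields
\[
\tr\overline{\rho}_{E,p}(\Frob_{\frakq})\equiv\pm(\Norm(\frakq)+1)\pmod{p},
\]
the sign depending on whether the reduction is split or non-split. Combining this with the trace comparison gives
\[
\bigl(\Norm(\frakq)+1-\frakf(T_{\frakq})\bigr)\bigl(\Norm(\frakq)+1+\frakf(T_{\frakq})\bigr)\equiv 0\pmod{\frakp},
\]
so $\frakp$ divides the factor $(\Norm(\frakq)+1)^{2}-\frakf(T_{\frakq})^{2}$ of $B_{\frakf,\frakq}$.

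It remains to treat the good-reduction case. Here $\tr\overline{\rho}_{E,p}(\Frob_{\frakq})=a_{\frakq}(E)$, where $a_{\frakq}(E)=\Norm(\frakq)+1-\#E(\calO_{K}/\frakq)$, and the Hasse bound immediately yields $|a_{\frakq}(E)|\leq 2\sqrt{\Norm(\frakq)}$. To place $a_{\frakq}(E)$ in the finite set $\mathcal{A}(\frakq)$ we additionally need $3\mid\#E(\calO_{K}/\frakq)$. This follows from Lemma \ref{semist}, which provides a $K$-rational point of order $3$ on $E$, combined with the fact that $\frakq\neq\lambda$ forces $\frakq\nmid 3$, so the reduction map is injective on $E[3]$ and the $3$-torsion persists modulo $\frakq$. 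Hence $a_{\frakq}(E)\in\mathcal{A}(\frakq)$, and the trace comparison gives $\frakp\mid(a_{\frakq}(E)-\frakf(T_{\frakq}))$, a factor already present in $\prod_{a\in\mathcal{A}(\frakq)}(a-\frakf(T_{\frakq}))$.

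The argument is essentially mechanical and I do not anticipate a genuine obstacle; the only subtle points are remembering to absorb the degenerate case $\frakq\mid p$ via the leading $\Norm(\frakq)$ factor, and verifying the persistence of the $K$-rational $3$-torsion point after reduction at $\frakq$. Both are handled directly by the assumption $\frakq\neq\lambda$ and by Lemma \ref{semist}.
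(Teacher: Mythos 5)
Your proof is correct and follows essentially the same case analysis as the paper's own argument: $\frakq\mid p$ is absorbed by the $\Norm(\frakq)$ factor, multiplicative reduction gives the $(\Norm(\frakq)+1)^2-\frakf(T_\frakq)^2$ factor, and good reduction places $a_\frakq(E)$ in $\mathcal{A}(\frakq)$ via the Hasse bound and the rational $3$-torsion point from Lemma \ref{semist}. Your added remark that the reduction map is injective on $E[3]$ because $\frakq\neq\lambda$ forces $\frakq\nmid 3$ is a welcome clarification of a step the paper leaves implicit (indeed the paper misstates the formula for $\#E(\F_\frakq)$ at that point), but it does not constitute a different approach.
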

 
 \begin{proof}
 	If $\mathfrak{q}|p$, then $\Norm(\mathfrak{q})$ is a power of $p$. Now assume that $\mathfrak{q}$ does not divide $p$. Then the Frey curve $E$ has semistable reduction at $\mathfrak{q}$. If it has a good reduction, then we have
 	$${\rm Tr}(\overline{\rho}_{E,p}({\rm Frob}_{\mathfrak{q}})) \equiv a_{\mathfrak{q}}(E) \equiv \mathfrak{f}(T_{\mathfrak{q}}) \pmod{\frakp}.$$
 	
 	Note that by Lemma~\ref{semist} the Frey curve $E$ has a $3$-torsion point, so $3$ divides $|E(\mathbb{F}_{\mathfrak{q}})|=\Norm(\mathfrak{q})+1=a_{\mathfrak{q}}(E)$. By Hasse-Weil bound, we know that $|a_{\mathfrak{q}}(E)|\leq 2\sqrt{\Norm(\mathfrak{q})}$. So $a_{\mathfrak{q}}(E)$ belongs to the finite set $\mathcal{A}(\frakq)$. Finally, suppose that $E$ has multiplicative reduction at $\mathfrak{q}$. Then by comparing the traces of the images of Frobenius at $\mathfrak{q}$ under $\overline{\rho}_{E,p}$, we have
 	$$\pm(\Norm(\mathfrak{q})+1) \equiv \mathfrak{f}(T_{\mathfrak{q}}) \pmod \frakp.$$
 	It then follows that $\frakp$ divides $(\Norm(\mathfrak{q})+1)^2-\mathfrak{f}(T_{\mathfrak{q}})^2$. Hence, $\mathfrak{p}$ divides $B_{\frakf,\mathfrak{q}}$.
 \end{proof}
 
 Using \texttt{Magma}, we computed the cuspidal newforms at the predicted levels, the fields $\mathbb{Q}_\frakf$ and eigenvalues $\frakf(T_\frakq)$ at the prime ideals $\frakq$ of norm less that $50$ for each imaginary quadratic number field $K=\mathbb{Q}(\sqrt{-d})$ with $d\in\{1,7,19,43,67\}$.  For each modular form $\frakf$ of level dividing $\lambda^3$, we computed the ideal
 $$B_\frakf:=\sum_{\frakq\in S}B_{\frakf,\frakq}, $$
where $S$ denotes the set of prime ideals $\frakq\neq \lambda$ of $K$ of norm less than $50$. Set $C_\frakf:={\rm Norm}_{\mathbb{Q}_\frakf/\mathbb{Q}}(B_\frakf)$. It then follows from Lemma~\ref{ideal_Bf} that $p$ divides $C_\frakf$.

The algorithm that we implemented and the results for the following fields can be found online at \url{https://sites.google.com/view/erman-isik/research?authuser=0}.

 \begin{enumerate}[(i)]
 	\item If $K=\mathbb Q (i)$, then there is no Bianchi modular form at level $\lambda^\epsilon$ where $\epsilon=1,2$. There is only one  modular form at level $\lambda^3$ and this form has CM.
 	
 	\item If $K=\mathbb Q(\sqrt{-7})$, then there is no Bianchi modular form at level $\lambda^\epsilon$ with $\epsilon=1,2$. There are three modular forms at level $\lambda^3$. For one of these forms, $C_\frakf$ is divisible by $2$ and $5$ and for the other $C_{\frakf}$ is divisible by $2$ and $7$. For the third modular form we get $C_\frakf=0$, and it has CM. 
 	
 	\item If $K=\mathbb Q(\sqrt{-19})$, then there is no Bianchi modular form at level $\lambda$. There are two Bianchi modular forms at level $\lambda^2$. For both of these forms, $C_\frakf$ is divisible by $2$. There are $3$ modular forms at level $\lambda^3$. For one of these forms, $C_\frakf$ is divisible by $7$ and for the other $C_{\frakf}$ is divisible by $2$ and $17$. For the third modular form we get $C_\frakf=0$ and this modular form has CM.

 	\item If $K=\mathbb Q(\sqrt{-43})$, then there is no Bianchi modular form at level $\lambda$. There are two Bianchi modular forms at level $\lambda^2$. For one of these forms, $C_\frakf$ is divisible by $2$ and $5$ and for the other $C_\frakf$ is a power of $2$. There are $3$ modular forms at level $\lambda^3$. For one of these modular forms, $C_\frakf$'s is one and for the other modular form $C_\frakf$ is divisible by $2$ and $23$.  For the third modular form we get $C_\frakf=0$, which has CM.

 	\item If $K=\mathbb Q(\sqrt{-67})$, then there is no Bianchi modular form at level $\lambda$. There are two Bianchi modular forms at level $\lambda^2$. For two of these forms, $C_\frakf$ is divisible by $2$. At level $\lambda^3$ we find $3$ modular forms. For one of these modular forms, $C_\frakf$ is one and for the other $C_\frakf$ is divisible by $11$ and $19$. For the third modular form we get $C_\frakf=0$ and this modular form has CM.

 \end{enumerate}
 
 Since $p> B_K$, it follows from Lemma~\ref{ideal_Bf} that the Frey curve cannot correspond to Bianchi modular forms with $C_\frakf\neq 0$. If $C_\frakf =0$ then the associated elliptic curve has complex multiplication as discussed above. If the Frey curve $E$ has complex multiplication, the integrality of its $j$-invariant implies that $a$ and $b$ are units in $\mathcal{O}_K$ but it is easy to check that the equation (\ref{maineqn}) doesn’t have any solutions with $a,b \in \mathcal{O}_K^\times$.  Hence, the Frey curve cannot correspond to Bianchi modular forms with CM as well, which proves Theorem~\ref{mainthm2}.
 
\begin{remark}
    For simplicity, we only considered the fields where there is only one prime ideal lying above $3$, so we excluded the fields $\Q(\sqrt{-2})$ and $\Q(\sqrt{-11})$. Note that the triple $(\omega,\omega^2,1)$, where $\omega$ is a primitive $3^{\rm rd}$ root of unity, is a solution to the equation \eqref{maineqn}. Hence, we had to exclude the case $\Q(\sqrt{-3})$.
    
    One can try to apply the argument above to deal with the Fermat equation with signature $(p,p,3)$ defined over the imaginary quadratic field $\Q(\sqrt{-163})$. However, we were unable to compute all the Bianchi modular forms  over $\Q(\sqrt{-163})$ at level $\lambda^3$.
\end{remark}

 \section{Appendix-Ternary Equation of Signature $(p,p,2)$}
 
 In \cite{IKO} we have proved that the equation $x^p+y^p=z^2$ has no solutions asymptotically where $2|b$ over certain number fields. Using the method in the proof of Theorem \ref{mainthm1}, we can extend the results to any number field $K$ with $h_K^+=1$. The difference relies in the proof method. Generally speaking to solve a Diophantine equation using modular method one needs to do either  compute newforms of a certain level or  compute all elliptic curves of a given conductor and particular information about torsion subgroup and/or rational isogeny or 
compute all solutions to an $S$-unit equation.

In \cite{IKO} we used the $S$-unit equation  method and this restricted us to the totally real number fields. However using Theorem \ref{thm for tot ram} as we did for proving Theorem \ref{mainthm1} we can get the following result about the solutions of $x^p+y^p=z^2$.

\begin{theorem}\label{thm:pp2}
	Let $K$ be a number field with narrow class number $h_K^{+}=1$ satisfying the conjectures \ref{conj1} and \ref{conj2}.  Assume $\beta$ is the only prime of $K$ lying above $2$. Let $W_K$ be the set of $(a,b,c)\in\calO_K$
	such that $a^p+b^p=c^2$ with $\beta|b$. Then there is a constant $B_K$ -depending only on $K$- such that for $p > B_K$, equation \eqref{maineqn} has no solution $(a,b,c) \in W_K$. 
	\end{theorem}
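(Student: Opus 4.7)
The plan is to follow the proof of Theorem \ref{mainthm1} essentially verbatim, with $\lambda$ (the prime above $3$) replaced by $\beta$ (the prime above $2$), the signature-$(p,p,3)$ Frey curve of (\ref{Frey}) replaced by a standard signature-$(p,p,2)$ Frey curve as in \cite{IKO}, and the closing appeal to Theorem \ref{thm for tot ram} invoked with $\ell=2$ rather than $\ell=3$. Given a putative primitive solution $(a,b,c)\in W_K$ with $\beta\mid b$ and prime exponent $p$, one attaches a Frey elliptic curve $E/K$ carrying a $K$-rational $2$-torsion point, whose discriminant (up to units at $\beta$) is a unit multiple of $a^p b^{2p}$, and whose $j$-invariant has $\beta$-adic valuation of the form $C\,v_\beta(2)-pv_\beta(b)$, which is negative and coprime to $p$ once $p$ exceeds an explicit bound.

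First I would establish the analogue of Lemma \ref{semist} using Papadopoulos' tables \cite{pap}: primitivity of $(a,b,c)$ and the shape of $c_4(E)$ force $E$ to be semistable at every $\frakq\neq\beta$, force $\overline{\rho}_{E,p}$ to be finite flat at primes of $K$ above $p$, and force the Serre conductor $\frakN_E$ of $\overline{\rho}_{E,p}$ to be a power of $\beta$ drawn from a finite set depending only on $K$. The hypothesis $\beta\mid b$ then forces potentially multiplicative reduction of $E$ at $\beta$ by Lemma \ref{image of inertia}, so $p\mid\#\overline{\rho}_{E,p}(I_\beta)$ exactly as in Lemma \ref{pot.mult.red.}. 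Next, Proposition \ref{irred} applied to the Galois closure of $K$ and a prime above $\beta$ produces a constant $C_K$ such that $\overline{\rho}_{E,p}$ is irreducible for $p>C_K$; since the image contains an element of order $p$, Theorem \ref{subgroups} upgrades this to surjectivity, and in particular to absolute irreducibility, as in Corollary \ref{galrepsur}. Running the level-lowering argument of Theorem \ref{thm:levelred} --- using Conjectures \ref{conj1} and \ref{conj2}, Proposition \ref{eigenform} and Lemma \ref{fake} --- then yields an elliptic curve $E'/K$ with $\overline{\rho}_{E,p}\sim\overline{\rho}_{E',p}$, good reduction away from $\beta$, and $v_\beta(j_{E'})<0$ (via Lemmas 5.1--5.2 of \cite{SS}). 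Replacing $E'$ by a $2$-isogenous curve if necessary and using Katz's Theorem \ref{torsion point} with $m=2$ in place of $m=3$, one may assume that $E'$ carries a $K$-rational $2$-isogeny.

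Finally I would apply Theorem \ref{thm for tot ram} with $\ell=2$: the hypotheses $\Q(\zeta_2)=\Q\subset K$, uniqueness of $\beta$ above $2$, and $\gcd(h_K^+,2(2-1))=\gcd(1,2)=1$ are all automatic from the standing assumption $h_K^+=1$. The theorem then rules out the existence of such an $E'$, delivering the desired contradiction and hence Theorem \ref{thm:pp2}. The main obstacle I expect is the $2$-adic conductor analysis: Papadopoulos' tables at $\ell=2$ are appreciably more intricate than at $\ell=3$, so some care is needed to verify that for $p$ larger than a constant depending only on $K$ the conductor exponent at $\beta$ lies in a finite set, and to carry out the change-of-model computation that yields $v_\beta(c_4(E))=v_\beta(c_6(E))=0$ in the analogue of Lemma \ref{semist}(i). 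A secondary minor issue is the odd/even Galois-representation dichotomy: for number fields with a real embedding one must check oddness of $\overline{\rho}_{E,p}$ at the $2$-torsion Frey curve, but this follows as usual from the determinant being the cyclotomic character.
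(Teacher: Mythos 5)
Your proposal is correct and follows essentially the same route as the paper: attach the $(p,p,2)$ Frey curve with its $K$-rational $2$-torsion point, run the level-lowering machinery of Theorem \ref{thm:levelred} mutatis mutandis (with Katz's theorem used for $m=2$) to produce $E'/K$ with a $K$-rational $2$-isogeny, good reduction away from $\beta$, and potentially multiplicative reduction at $\beta$, and then invoke Theorem \ref{thm for tot ram} with $\ell=2$ to obtain the contradiction. The paper's own argument is a terse pointer to the $(p,p,3)$ proof and to \cite{IKO} for the Frey-curve and conductor computations; you have simply spelled out those delegated details explicitly, and your concerns about the $2$-adic conductor analysis and oddness are both correctly dispatched.
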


A hypothetical solution $(a,b,c) \in W_K$ with exponent $p>B_K$ gives rise to an elliptic curve $E'/K$ with a $K$-rational $2$-isogeny, good reduction away from $\frakP$ and potentially multiplicative reduction at $\frakP$. But this contradicts with Theorem \ref{thm for tot ram} above, hence there is no such a solution. 

Now we give some examples of totally real number fields with $h_K^{+}=1$, in which $2$ is totally ramified.

\textbf{Degree $2^n$ case.}  Let $f_1(x)=x^2-2$, and $f_n(x)= (f_{n-1}(x))^2-2$ for $n \geq 2$. Let $K_n=\Q(\theta_n)$, where $\theta_n $ is a root of the polynomial $f_n(x)$. Then it can be seen that $K_n=\Q(\sqrt{2+ \ldots +\sqrt{2}}) = \Q(\zeta_{2^{n+2}}+\zeta_{2^{n+2}}^{-1})$, the maximal totally real subfield of $K_n$. Then, $K_n$ is a totally real number field of degree $2^n$ in which $2$ is totally ramified. For $2 \leq n \leq 5$ it is possible to check that the narrow class number of $K_n$ is $1$, hence it follows from Corollary 6.5 in \cite{IKO} that the asymptotic FLT holds for all $K_n$ with $ 2 \leq n \leq 5$. For some other higher values of $n$ it is only conjecturally true that  the narrow class number is $1$ (using \texttt{MAGMA} under GRH).

\textbf{Degree $3,4,5$ case.} For $n \geq 3$, let $\mathcal{F}_n$ be the set of totally real number fields of degree $n$, discriminant $\leq 10^6$, in which $2$ totally ramifies. We are able to find the complete sets $\mathcal{F}_3, \mathcal{F}_4$ and $\mathcal{F}_5$ in the John Jones \href{https://hobbes.la.asu.edu/NFDB/}{\texttt{Number Field Database}} \cite{DB}. We define the following sets:
\begin{itemize}
	\item Let $\mathcal{G}_n$ be the set of $K \in \mathcal{F}_n$ such that $h_K^+$ is odd.
	\item Let $\mathcal{K}_n$ be the set of $K \in \mathcal{G}_n$ such that $h_K^+=1$.
\end{itemize}

Of course $ \mathcal{K}_n \subseteq \mathcal{G}_n \subseteq \mathcal{F}_n$, and the asymptotic FLT holds for any $K$ belonging to $\mathcal{K}_n$ by Corollary 6.5 in \cite{IKO}. The results are summarised in the following table, and can be found online at \url{https://sites.google.com/view/erman-isik/research?authuser=0}. 
\begin{table}[h!]
	\begin{center}
		\label{tab:table3}
		
		\begin{tabular}{|c|c|c|c|}
			\hline\hline 
			
			$n$ & $\mathcal{F}_n$ & $\mathcal{G}_n$ & $\mathcal{K}_n$ \\
			\hline\hline
			$3$ & $8600$ & $3488$ & $3046$ \\
			$4$ & $1243$ & $1$ & $1$ \\
			$5$ & $23$ & $13$ & $13$ \\
			\hline\hline
		\end{tabular}

	\end{center}
\end{table}

 \bibliographystyle{plain}
 \bibliography{ref.bib}
 
 \end{document}